\documentclass[12pt,a4paper]{article}

\setlength{\columnseprule}{0.2mm}
\usepackage{comment}
\usepackage{mathtools}
\usepackage{amsmath}
\usepackage{amsthm}
\usepackage{amsfonts}
\usepackage{amssymb}
\usepackage{latexsym }
\usepackage{float}

\usepackage{tikz-cd}

\usepackage{graphicx,xcolor}

\usepackage{tikz}
\usepackage[framemethod=tikz]{mdframed}
\usepackage[abs]{overpic}

\theoremstyle{plain}
\newtheorem{theo}{Theorem}[section]
\newtheorem{prop}{Proposition}[section]
\newtheorem{cor}{Corollary}[section]
\newtheorem{lemm}{Lemma}[section]
\newtheorem{fact}{Fact}[section]

\theoremstyle{definition}
\newtheorem{defi}{Definition}[section]
\newtheorem*{eg*}{Example}

\theoremstyle{remark}
\newtheorem*{re*}{remark}

\usepackage{amsmath, amssymb, amsthm}

\newcommand{\soejitilde}[6]{\rlap{\hspace{#1 em}\raisebox{#2 ex}{$\widetilde{\phantom{#5}}\hspace{#3 em}\raisebox{#4 ex}{\scriptsize #6}$}}{#5}}
\newcommand{\Soejitilde}[6]{\rlap{\hspace{#1 em}\raisebox{#2 ex}{$\widetilde{\phantom{#5}}\hspace{#3 em}\raisebox{#4 ex}{\scriptsize $#6$}$}}{#5}}

\makeatletter
\newcommand{\biggg}{\bBigg@{7}}
\makeatother

\begin{document}

\title{A topological proof of Terao's generalized Arrow's Impossibility Theorem}
\author{Takuma Okura}
\date{\today}

\maketitle
\tableofcontents

\begin{abstract}
In Terao~\cite{Terao2006ChambersOA}, Hiroaki Terao defined and studied ``admissible map'', which is a generalization of ``social welfare function'' in the context of hyperplane arrangements.
Using this, he proved a generalized Arrow's Impossibility Theorem using combinatorial arguments.
This paper provides another proof of this generalized Arrow's Impossibility Theorem, using the idea of algebraic topology.
\end{abstract}

\section{Introduction}
In the mid-20th century, Kenneth Joseph Arrow defined ``social welfare function'' to discuss the mechanism for determining the preference of an entire society based on individual preferences. He proved that there is no social welfare function satisfying certain conditions. This theorem is known as Arrow's impossibility theorem.
\footnote{Until now, there have been many works related to Arrow's impossibility theorem. Among them, Eliaz's one (Eliaz~\cite{MR2040998}) and works by Gibbert and Satterthwaite (Gibbert~\cite{MR0441407}, Satterthwaite~\cite{MR0414051}) are also approached by topological method (Baryshnikov, Root~\cite{baryshnikov2024topological}, Fujimoto~\cite{Fuji}, Tanaka~\cite{MR2233333})}
(Arrow~\cite{arrow1950difficulty}, \cite{1194c2af-bbd9-34a4-ab4a-a33e7650b716})
While his approach was combinatorial, Graciela Chichilnisky introduced a topological point of view to social choice theory.
(Chichilnisky~\cite{chichilnisky1976manifolds}$\sim$\cite{MR0720115})
Her approach was topological in the formulation of the problem, making it distinct from the traditional formulation of Arrow.
In (Baryshnikov~\cite{BARYSHNIKOV1993404}), Yuliy Baryshnikov opened up a new possibility of using geometry solely in the method of proof, while keeping the content of the traditional Arrow's impossibility theorem unchanged.
Following this, some topological proofs of Arrow's Impossibility theorem are considered. (Manabe~\cite{Mana}, Rajsbaum, Ravent\'{o}s~\cite{rajsbaum2022combinatorial}, Tanaka~\cite{MR2201518}, \cite{MR2220591}, \cite{MR2493199})

On the other hand, in Terao~\cite{Terao2006ChambersOA}, Terao Hiroaki presented a new theorem (hereafter referred to as the ``main theorem'' in this paper) which properly contains the original Arrow's impossibility theorem.
He achieved this by using tools of hyperplane arrangements.
\footnote{As another generalization of Arrow's impossibility theorem, the following preprint was recently published. (Lara, Rajsbaum, Ravent\'{o}s~\cite{lara2024generalization}) It provides a generalization in the context of domain restriction.}
However, his proof method was combinatorial.
The question of this paper is whether the topological method can be applied to prove Terao's generalized Arrow impossibility theorem. In this paper, we show the answer is ``Yes'' by giving a new topological proof of Terao's theorem (chapter \ref{Proof}).

The geometric proof consists of two parts.
First, I constructed simplicial complexes and simplicial maps from combinatorial objects and focused on their homological properties, based on  Manabe~\cite{Mana} and Baryshnikov~\cite{BARYSHNIKOV1993404}.
Following Baryshnikov~\cite{BARYSHNIKOV1993404}, I used nerve and nerve theorem to this end.
Second, following Manabe~\cite{Mana}, I showed the existence of a dictator using metric.
In Manabe~\cite{Mana}, he used a combinatorial argument to reduce Arrow's impossibility theorem to the case of three alternatives. However, this paper does not employ such a reduction. I give a proof that is valid in the general case.
In Baryshnikov~\cite{BARYSHNIKOV1993404}, he overcame the second step by handling concrete cycles in the homology group. While it may be possible to extend this method to Terao's framework, that is not attempted in this study.\\



\section{Original Arrow's Impossibility Theorem}
Arrow's impossibility theorem is a theorem that states no voting system satisfies certain conditions. In this context, the voting system is formulated as a \textit{social welfare function} defined as follows.
Let us think of a society that consists of $m$ people. Suppose each individual has \textit{preferences}, that is linear ordering, over the elements of a certain finite set $A$.
When we denote the set of all total orders on $A$ as $L_{A}$, the \textit{social welfare function} is defined as a map from $L_{A}^{m}$ to $L_{A}$.
However, when considering all possible functions, democracy is not reflected at all. Therefore, it is meaningful to impose some constraints.
He imposed the following three conditions for social welfare function to be reasonable.\\

The first one is the following.
 \begin{defi}[Independence of Irrelevant Alternatives(IIA)]\label{IIA}
Let $A$ be an arbitrary finite set and $m, l$ be natural numbers. 
We denote the set of all total orders on $A$ as $L_{A}$.
We say a map $f\colon L_{A}^{m} \to L_{A}^{l}$ satisfies \textit{Independence of Irrelevant Alternatives $($IIA$)$} when for all distinct two elements $a$, $b$ in $A$, there exists a map $\phi_{\{a, b\}}\colon L_{\{a, b\}}^{m} \to L_{\{a, b\}}^{l}$ which makes the following diagram commutative. Here, $\epsilon_{\{a, b\}} \colon L_A \to L_{\{a, b\}}$ denotes a map that restricts relations on A to that on subset $\{a, b\}$.

\begin{figure}[ht]

\centering


\includegraphics{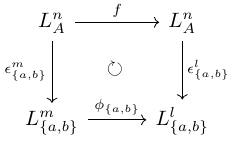}

\end{figure}

\end{defi}

This condition states that the ranking of $a$ and $b$ in the preferences of the entire society is determined only by the information on the ranking of $a$ and $b$ in the preferences of each individual.\\

The second condition is the \textit{Pareto Property $($PAR$)$}.
\begin{defi}[Pareto Property (PAR)]\label{PAR}
Let $A$ be an arbitrary finite set and $m$ be a natural number. 
We denote the set of all total orders on $A$ as $L_{A}$.
We say a map $f\colon L_{A}^{m} \to L_{A}$ satisfies \textit{Pareto Property $($PAR$)$} when $f\circ\Delta = id$.
Here, $\Delta\colon L_{A} \to L_{A}^{m}$ is the diagonal map. 
\end{defi}

This condition states that when all members of the society have the same preference $p_0$, the preference of the society should also be $p_0$.\\

The last condition is \textit{Nondictatorship $($ND$)$}
\begin{defi}[Dictator, Nondictatorship (ND)]\label{ND}
Let $A$ be an arbitrary finite set and $m$ be a natural number. 
We denote the set of all total orders on $A$ as $L_{A}$.
For a map $f\colon L_{A}^{n} \to L_{A}$, $i$ $(1\leq i \leq n)$ is said to be a \textit{dictator} when $f$ coincides with the projection onto the \textit{i}-th component.
We say a map $f\colon L_{A}^{n} \to L_{A}$ satisfies \textit{Nondictatorship $($ND$)$} if there is no dictator for $f$.
\end{defi}

This condition excludes social welfare functions that always follow the opinion of a particular individual.\\

Surprisingly, Arrow's impossibility theorem states that there is no social welfare function that satisfies these three conditions. 

\begin{prop}[Arrow's impossibility theorem]\label{Arrow}
Let $A$ be a finite set which satisfies $|A| \geq 3$.
Let $m$ be natural numbers. 
We denote the set of all total orders on $A$ as $L_{A}$.
A map $f\colon L_{A}^{n} \to L_{A}$ which satisfies IIA and PAR is a projection onto a certain component.
\end{prop}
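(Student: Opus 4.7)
The plan is to follow the two-step topological strategy advertised in the introduction: a Baryshnikov-style nerve construction to extract the relevant homological obstruction, then a Manabe-style metric argument to promote a local pivotal coordinate to a global dictator. As a preliminary reduction, I would use IIA to replace $f$ by the family of pairwise maps $\phi_{\{a,b\}} \colon L_{\{a,b\}}^m \to L_{\{a,b\}}$. Identifying $L_{\{a,b\}}$ with $\{0,1\}$, each $\phi_{\{a,b\}}$ becomes a Boolean function on $m$ variables, and PAR forces it to fix the two constant profiles. The goal is then to show that a single index $i$ realizes every $\phi_{\{a,b\}}$ as the projection onto the $i$-th coordinate.

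For the first step, I would construct a simplicial model of $L_A^m$ via a covering indexed by compatible pairwise information, and apply the nerve theorem. The crucial geometric input is that for $|A| \geq 3$ the consistency constraint among the three pairs $\{a,b\},\{b,c\},\{a,c\}$ is non-trivial: not every element of $\prod L_{\{a,b\}}$ comes from a linear order, and this obstruction (the Condorcet cycle) produces non-trivial reduced homology. Since $f$ respects consistency, it induces a map on the corresponding nerves, and PAR pins the induced map on the classes of the constant profiles. A homological computation should then force at least one coordinate $i$ to be \emph{pivotal} for at least one pair, i.e.\ $\phi_{\{a,b\}}$ to depend non-trivially on its $i$-th variable.

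For the second step, I would equip $L_A^m$ with a Hamming-type metric counting how many voters differ between two profiles. Starting from a profile at which voter $i$ is pivotal for $\{a,b\}$, I would walk through a carefully chosen sequence of neighboring profiles, introducing a third alternative $c$ and exploiting IIA consistency, to show that pivotality of $i$ for $\{a,b\}$ propagates to pivotality for $\{a,c\}$ and $\{b,c\}$, and inductively for every pair. Combined with the monotonicity forced by PAR on each Boolean function $\phi_{\{a,b\}}$, this upgrades ``$i$ is pivotal for some pair'' to ``$i$ is the dictator''.

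I expect the main obstacle to be the first step: choosing the covering so that its nerve has a computable and non-trivial homotopy type, checking the contractibility hypotheses of the nerve theorem on intersections, and verifying that the obstruction is actually detected by the map induced from $f$. The metric step that closes the argument should then be controlled bookkeeping, though for general $|A|$ (rather than the $|A|=3$ case treated combinatorially in Manabe~\cite{Mana}) one must be careful to make the propagation of pivotality work uniformly in the number of alternatives.
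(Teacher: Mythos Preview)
Your plan has the right two-step architecture, but the halves do not mesh as you describe them, and the topology is not carrying its weight. The conclusion you draw from the homological step --- that some coordinate $i$ is pivotal for some pair $\{a,b\}$, i.e.\ that $\phi_{\{a,b\}}$ depends on its $i$-th variable --- follows immediately from PAR alone (a non-constant Boolean function depends on some input) and requires no nerve construction whatsoever. What the paper's sphere computation actually delivers is much stronger: the identity $\sum_{i=1}^m(\Phi\circ\rho_{i,\vec{c_0}})_* = \mathrm{id}$ on degree-$(r(\mathcal{A})-1)$ homology (Proposition~\ref{1}) forces some $i_0$ for which the induced map has nonzero degree, and Lemma~\ref{2} together with Corollary~\ref{3} then upgrade this to $\Phi\circ\rho_{i_0,\vec{c_0}} = \mathrm{id}$ on all of $\mathbf{Ch}(\mathcal{A})$. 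The topological step thus already hands you a \emph{full dictator at the constant profile $\vec{c_0}$}, simultaneously for every pair, not a voter who is pivotal for a single pair at a single profile.

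Correspondingly, the paper's second step is not the propagation you describe. You propose to spread pivotality from one pair $\{a,b\}$ to all other pairs by introducing a third alternative and walking in a Hamming metric on $L_A^m$; that is precisely the classical field-expansion / pivotal-voter proof of Arrow, and if you carry it out you will have produced a complete combinatorial argument in which the homological machinery contributed nothing --- certainly not ``controlled bookkeeping''. The paper instead puts the Hamming metric on the \emph{codomain} $\mathrm{IIA}_{\mathrm{bij}}(\mathcal{A})$ (via the tuple $(\phi_{H_1},\dots,\phi_{H_n})$), shows that $\vec{c}\mapsto\Phi\circ\rho_{i_0,\vec{c}}$ is non-expanding from $\mathbf{Ch}(\mathcal{A})^{m-1}$ into that space, and then invokes the structural fact (Fact~\ref{last}) that every non-identity IIA bijection lies at distance $\geq 2$ from $\mathrm{id}$. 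The propagation is therefore across the profiles $\vec{c}$ of the \emph{other} voters, not across pairs of alternatives, and it only gets off the ground because step one has already established $\Phi\circ\rho_{i_0,\vec{c_0}}=\mathrm{id}$.
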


\section{Formulation by Terao}
In this section, we state Terao's extended Arrow's impossibility theorem based on Terao~\cite{Terao2006ChambersOA}.\\

First, we review basic concepts that concern \textit{hyperplane arrangement}.
\begin{defi}[hyperplane arrangement]
Let $V$ be a vector space.
Finite set of affine subspace of $V$ is said to be \textit{hyperplane arrangement} in $V$.
\end{defi}

\begin{eg*}[Boolean Arrangement]
Let \textit{n} be a natural number.\\
$\mathcal{A}=\{\textrm{ker}x_1, \textrm{ker}x_2, ... ,\textrm{ker}x_n\}$ defines a hyperplane arrangement in $\mathbb{R}^n$.
This is called \textit{Boolean Arrangement}.
\end{eg*}

\begin{eg*}[Braid Arrangement]
Let $n$ be a natural number.
$\mathcal{A}=\{\textrm{ker}(x_i-x_j) \mid 1\leq i < j \leq n, i , j \in \mathbb{Z} \}$ defines a hyperplane arrangement in $\mathbb{R}^n$.
This is called \textit{Braid Arrangement}.
\end{eg*}

\begin{defi}[subarrangement]
Let $\mathcal{A}$ be a hyperplane arrangement in a vector space $V$.
A subset $\mathcal{B}$ of $\mathcal{A}$ is also a hyperplane arrangement in $V$.
So, we say $\mathcal{B}$ is a \textit{subarrangement} of $\mathcal{A}$.
\end{defi}

\begin{defi}[central]
A hyperplane arrangement $\mathcal{A}$ is said to be \textit{central} if all of its element contains origin.
\end{defi}

\begin{defi}[chamber, \textbf{Ch}$(\mathcal{A})$]
Let $\mathcal{A}$ be a hyperplane arrangement in $V$.
A connected component of $V-\underset{H \in \mathcal{A}}{\bigcup} H$ is said to be a \textit{chamber} of $\mathcal{A}$.
A set of all chambers of $\mathcal{A}$ is denoted by \textbf{Ch}$(\mathcal{A})$.
\end{defi}

\begin{defi}[rank, $r(\mathcal{A})$]
Let $\mathcal{A}$ be a central hyperplane arrangement in $V$.
$\textrm{dim}V-\textrm{dim}(\underset{H \in \mathcal{A}}{\bigcap} H)$ is said to be \textit{rank} of $\mathcal{A}$ and denoted by $r(\mathcal{A})$.
\end{defi}

\begin{defi}[decomposable, indecomposable]
Let $\mathcal{A}$ be a central hyperplane arrangement in $V$.
$\mathcal{A}$ is said to be \textit{decomposable} when there are nonempty subarrangements $\mathcal{A}_1$, $\mathcal{A}_2$,\ldots, $\mathcal{A}_n$ ($n \geq 2$)
such that $\mathcal{A} = \mathcal{A}_1 \sqcup \mathcal{A}_2\sqcup,\ldots,\sqcup\mathcal{A}_n$ and $r(\mathcal{A})$ = $r(\mathcal{A}_1) + r(\mathcal{A}_2) + \cdots + r(\mathcal{A}_n)$.
We denote this as $\mathcal{A} = \mathcal{A}_1 \uplus \mathcal{A}_2 \uplus \cdots \uplus \mathcal{A}_n$.
We say $\mathcal{A}$ is \textit{indecomposable} if $\mathcal{A}$ is not decomposable.
\end{defi}

\begin{fact}[Terao~\cite{Terao2006ChambersOA}, Lemma 2.1. and Proposition 2.3.]
Let $\mathcal{A}$ be a central hyperplane arrangement in $V$.
Up to order, there is a unique decomposition $\mathcal{A} = \mathcal{A}_1 \uplus \mathcal{A}_2 \uplus \cdots \uplus \mathcal{A}_n$ such that each $\mathcal{A}_1, \mathcal{A}_2, \ldots,\mathcal{A}_n$ is indecomposable.
This decomposition coincides with the decomposition of the graph $\Gamma(\mathcal{A})$ (definition $\mathrm{\ref{graph}}$) into connected components.
\end{fact}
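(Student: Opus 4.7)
The plan is to handle the two assertions in turn: first the existence and uniqueness of a decomposition of $\mathcal{A}$ into indecomposables, then the identification of the summands with the connected components of $\Gamma(\mathcal{A})$.

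For existence I would induct on $|\mathcal{A}|$. The base case $|\mathcal{A}|=1$ is immediate. In the inductive step, if $\mathcal{A}$ is already indecomposable there is nothing to do; otherwise the definition of decomposability furnishes a nontrivial $\mathcal{A} = \mathcal{A}_1 \uplus \cdots \uplus \mathcal{A}_n$ in which each $\mathcal{A}_i$ has strictly smaller cardinality than $\mathcal{A}$. Applying the inductive hypothesis to each $\mathcal{A}_i$ and observing that $\uplus$ is associative (immediate from additivity of $r$) glues the refinements into a decomposition of $\mathcal{A}$ into indecomposables.

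For uniqueness I would describe the summands intrinsically rather than compare two decompositions directly. Introduce the equivalence relation $\approx$ on $\mathcal{A}$ generated by ``$H$ and $H'$ lie in a common circuit of the matroid of $\mathcal{A}$''. The first key step is to show that in any decomposition $\mathcal{A} = \mathcal{B}_1 \uplus \cdots \uplus \mathcal{B}_k$ no circuit straddles two blocks: such a circuit would produce a nontrivial linear dependence between hyperplanes of $\mathcal{B}_i$ and $\mathcal{B}_j$ and hence force $r(\mathcal{B}_i \cup \mathcal{B}_j) < r(\mathcal{B}_i) + r(\mathcal{B}_j)$, contradicting rank-additivity. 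Consequently each $\mathcal{B}_j$ is a union of $\approx$-classes. The second key step is to show that every $\approx$-class is itself indecomposable, so that the canonical partition into $\approx$-classes is already the finest valid decomposition; any candidate split of such a class would separate two hyperplanes linked by a chain of circuits, and a circuit traversing the split would contradict rank-additivity exactly as before. Taken together, these two facts force every indecomposable decomposition to equal the partition into $\approx$-classes, giving uniqueness up to reordering.

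Once this intrinsic description is in place, the graph statement is a matching exercise: the definition of $\Gamma(\mathcal{A})$ (Definition \ref{graph}) places an edge between two hyperplanes precisely when they co-occur in a common dependence of the relevant sort, so the connected components of $\Gamma(\mathcal{A})$ coincide with the $\approx$-classes and hence with the indecomposable summands. The main obstacle I anticipate is the rank inequality underlying both key steps of the uniqueness argument, namely converting a chain of shared circuits into the strict inequality $r(\mathcal{A}' \cup \mathcal{A}'') < r(\mathcal{A}') + r(\mathcal{A}'')$ that rules out a given split. I expect that a submodularity argument on $r$, applied inductively along such a chain, will handle this cleanly and complete the proof.
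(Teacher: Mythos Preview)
The paper does not actually prove this statement: it is recorded as a \emph{Fact} with a citation to Terao's Lemma~2.1 and Proposition~2.3, and no argument is given in the present paper. So there is no in-paper proof to compare against; your outline would in effect be supplying what the author chose to import.

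That said, your plan is the standard matroid-theoretic one and is essentially correct. One step deserves to be made explicit. Your ``first key step'' shows that a circuit crossing a partition forces $r(\mathcal{B}_i\cup\mathcal{B}_j)<r(\mathcal{B}_i)+r(\mathcal{B}_j)$; to finish the uniqueness argument you also need the \emph{converse}: if no circuit crosses the partition into $\approx$-classes, then the ranks are additive, i.e.\ the $\approx$-partition is itself a $\uplus$-decomposition. Without this, knowing that each indecomposable block is a union of $\approx$-classes and that each $\approx$-class is indecomposable does not yet force a block to be a single class. The converse is easy: pick bases $B_1,B_2$ of two distinct $\approx$-classes $E_1,E_2$; if $B_1\cup B_2$ were dependent it would contain a circuit, which would have to meet both $E_1$ and $E_2$ (each $B_i$ is independent), contradicting the definition of $\approx$-classes. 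Hence $r(E_1\cup E_2)\ge |B_1|+|B_2|=r(E_1)+r(E_2)$, and the reverse inequality is automatic. Once you insert this, your argument goes through and matches the content of Terao's Lemma~2.1 (indecomposable $\Leftrightarrow$ $\Gamma(\mathcal{A})$ connected) and Proposition~2.3.
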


\begin{eg*}[Boolean Arrangement is decomposable]
Let $n$ be a natural number such that $n \geq 2$.
Let $\mathcal{A}$ be Boolean Arrangement in $\mathbb{R}^n$.
If we set $\mathcal{A}_1=\{\textrm{ker}x_1\}$ and $\mathcal{A}_2=\{\textrm{ker}x_2, \ldots ,\textrm{ker}x_n\}$, then $\mathcal{A} = \mathcal{A}_1 \uplus \mathcal{A}_2$.
Therefore $\mathcal{A}$ is decomposable.
\end{eg*}

\begin{defi}[dependent, independent]
Let $\mathcal{A}$ be a real central hyperplane arrangement in $V$.
We say subarrangement $\mathcal{B}$ is \textit{dependent} when $r(\mathcal{B}) < |\mathcal{B}|$.
We say $\mathcal{B}$ is \textit{independent} when it is not dependent.
\end{defi}

\begin{defi}[circuit]
Let $\mathcal{A}$ be a  real central hyperplane arrangement in $V$.
We say subarrangement $\mathcal{B}$ is a \textit{circuit} if it is minimal in the set of dependent subarrangements.
\end{defi}

\begin{defi}[graph $\Gamma(\mathcal{A})$]\label{graph}
Let $\mathcal{A}$ be a real central hyperplane arrangement in $V$.
We can construct a graph $\mathit{\Gamma(\mathcal{A}})$ in the following way.
The vertex set is \textbf{Ch}$(\mathcal{A})$. We connect $H_i$ and $H_j$ with an edge if and only if there is a circuit that contains $H_i$ and $H_j$.
\end{defi}

\begin{prop}[criterion for decomposability, Terao~\cite{Terao2006ChambersOA} Lemma 2.1.]
Let $\mathcal{A}$ be a real central hyperplane arrangement in $V$.
$\mathcal{A}$ is indecomposable if and only if $\Gamma(\mathcal{A})$ is connected.
\end{prop}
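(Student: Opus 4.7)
My plan is to identify $\mathcal{A}$ with its associated matroid on the ground set $\mathcal{A}$ (independent sets $=$ independent subarrangements) and argue both directions by controlling where circuits can sit with respect to the connected components of $\Gamma(\mathcal{A})$. Concretely, I would fix for each $H\in\mathcal{A}$ a nonzero normal $\alpha_H\in V^*$, so that $r(\mathcal{B})=\dim\mathrm{span}\{\alpha_H:H\in\mathcal{B}\}$ for every $\mathcal{B}\subseteq\mathcal{A}$; under this identification the notions of circuit, rank and the $\uplus$ decomposition coincide with the corresponding matroid notions.

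For the direction ``decomposable $\Rightarrow$ $\Gamma(\mathcal{A})$ disconnected'', I assume $\mathcal{A}=\mathcal{A}_1\uplus\mathcal{A}_2$. The defining identity $r(\mathcal{A})=r(\mathcal{A}_1)+r(\mathcal{A}_2)$ forces the spans $\mathrm{span}\{\alpha_H:H\in\mathcal{A}_i\}$ to intersect trivially, so rank is additive on arbitrary pairs of subsets $\mathcal{B}_i\subseteq\mathcal{A}_i$: $r(\mathcal{B}_1\sqcup\mathcal{B}_2)=r(\mathcal{B}_1)+r(\mathcal{B}_2)$. I then show every circuit lies in a single part: for any circuit $\mathcal{C}$, write $\mathcal{C}_i=\mathcal{C}\cap\mathcal{A}_i$, so by minimality each proper subset $\mathcal{C}_i$ is independent and hence $|\mathcal{C}|=|\mathcal{C}_1|+|\mathcal{C}_2|=r(\mathcal{C}_1)+r(\mathcal{C}_2)=r(\mathcal{C})$, forcing $\mathcal{C}$ itself to be independent unless one of the $\mathcal{C}_i$ is empty. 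So no circuit, and therefore no edge of $\Gamma(\mathcal{A})$, crosses between $\mathcal{A}_1$ and $\mathcal{A}_2$, and the graph is disconnected.

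For the converse, let $\mathcal{A}_1,\ldots,\mathcal{A}_n$ be the (vertex sets of the) connected components of $\Gamma(\mathcal{A})$, and choose for each $i$ a maximal independent $\mathcal{B}_i\subseteq\mathcal{A}_i$, so $|\mathcal{B}_i|=r(\mathcal{A}_i)$. I claim $\mathcal{B}:=\bigsqcup_i \mathcal{B}_i$ is a maximal independent subset of $\mathcal{A}$, which gives $r(\mathcal{A})=|\mathcal{B}|=\sum_i r(\mathcal{A}_i)$ and hence the decomposition $\mathcal{A}=\mathcal{A}_1\uplus\cdots\uplus\mathcal{A}_n$. Independence: any circuit contained in $\mathcal{B}$ has its elements pairwise joined by edges of $\Gamma(\mathcal{A})$, so it lies in a single $\mathcal{A}_i$ and hence in $\mathcal{B}_i$, contradicting the independence of $\mathcal{B}_i$. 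Maximality: for $H\in\mathcal{A}_i\setminus\mathcal{B}_i$, the maximality of $\mathcal{B}_i$ inside $\mathcal{A}_i$ produces a circuit in $\mathcal{B}_i\cup\{H\}\subseteq\mathcal{B}\cup\{H\}$.

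The main obstacle I foresee is not the graph-theoretic packaging but the linear-algebra bookkeeping: one must carefully justify that rank is genuinely additive across the $\uplus$ decomposition for all pairs of subsets (used in the forward direction) and that a ``maximal independent'' subarrangement really extends basis-style arguments to unions (used in the converse). Both reduce to routine statements about complementary linearly independent subspaces of $V^*$, but they are the load-bearing steps; once they are in place, the circuit-to-component argument makes both implications nearly immediate.
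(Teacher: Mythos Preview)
Your argument is correct and is essentially the standard matroid-theoretic proof that a matroid is connected (no nontrivial direct-sum decomposition) if and only if its circuit graph is connected; the linear-algebra bookkeeping you flag as ``load-bearing'' is indeed routine once one passes to normals in $V^*$. The paper, however, gives no proof of its own for this proposition: it simply cites Terao, Lemma~2.1. So there is nothing in the paper to compare your approach against; you have supplied a full argument where the paper only provides a reference.
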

\begin{proof}
Terao~\cite{Terao2006ChambersOA} Lemma 2.1. 
\end{proof}

\begin{eg*}[Braid Arrangement is indecomposable]
Let \textit{n} be a natural number such that $n \geq 3$. 
Let $\mathcal{A}$ be Braid Arrangement in $\mathbb{R}^n$. 
$\Gamma(\mathcal{A})$ is connected because each subarrangement $\mathcal{B}$ such that $|\mathcal{B}| = 3$ is a circuit.
Therefore, Braid Arrangement is indecomposable.
\end{eg*}

\begin{re*}
\rm{From now on, unless otherwise stated, the coefficient field of vector space is real, and hyperplane arrangement is always central.}
\end{re*}

Terao~\cite{Terao2006ChambersOA} successfully formulated Arrow's impossibility theorem by introducing the concept of \textit{admissible map}. In this paper, we divide the definition into two parts (IIA and PAR) to make it easier to understand the correspondence with Arrow's classical impossibility theorem.

First, we define a notation for certain maps in order to state the definition of IIA.
 
 \begin{defi}[map $\epsilon_H$]
Let $\mathcal{A}$ be a real central hyperplane arrangement in $V$.
Let $n$ be a natural number. 
For arbitrary element $H$ in $\mathcal{A}$, we define a map $\epsilon_H \colon \textbf{Ch}(\mathcal{A}) \to \textbf{Ch}(\{H\})$ as follows.
For an element $c$ in $\textbf{Ch}(\mathcal{A})$, $\epsilon_H(c)$ is an element of $\textbf{Ch}(\{H\})$ which contains $c$.
\end{defi}

\begin{defi}[IIA (in hyperplane arrangement setting)]\label{IIA2}
Let $\mathcal{A}$ be a real central hyperplane arrangement in $V$.
Let $m, l$ be natural numbers. 
A map $\Phi \colon \textbf{Ch}(\mathcal{A})^m \to \textbf{Ch}(\mathcal{A})^l$ is said to satisfy \textit{IIA} when for each $H$ there exists a map $\phi_H$
which makes the following diagram commutative.

\begin{figure}[ht]

\centering

\includegraphics{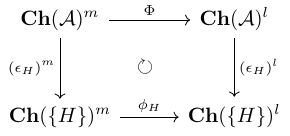}
\end{figure}

\end{defi}

\begin{defi}[PAR (in hyperplane arrangement setting)]
Let $\mathcal{A}$ be a real central hyperplane arrangement in $V$.
Let $m$ be natural numbers. 
We say a map $\Phi \colon \textbf{Ch}(\mathcal{A})^m \to \textbf{Ch}(\mathcal{A})$ satisfies \textit{PAR} when for every chamber $c$ of $\mathcal{A}$, $\Phi(c, c,\ldots, c) = c$ holds.
\end{defi}

\begin{defi}[admissible map, Terao~\cite{Terao2006ChambersOA} Definition 1.1.]
Let $\mathcal{A}$ be a real central hyperplane arrangement in $V$.
Let $m$ be natural numbers. 
A map $\Phi \colon \textbf{Ch}(\mathcal{A})^m \to \textbf{Ch}(\mathcal{A})$ is said to be \textit{admissible} if $\Phi$ satisfies IIA and PAR(in in hyperplane arrangement setting)
\end{defi}

Based on the above definitions, Terao's extended Arrow's impossibility theorem can be stated as follows.
The original Arrow's impossibility theorem corresponds to the case where $\mathcal{A}$ is a Braid Arrangement.
\footnote{More precisely, Terao has classified admissible arrangements, including this theorem as a result. For specific details, please refer to reference Terao~\cite{Terao2006ChambersOA}.}

\begin{theo}[Terao's extended Arrow's impossibility theorem, Terao~\cite{Terao2006ChambersOA} Theorem 1.5. (2)]\label{mainthm}
Let $\mathcal{A}$ be a real central hyperplane arrangement in $V$.
Let $m$ be natural numbers. 
if $|\mathcal{A}|\ge3$ , then every admissible map $\Phi : \textbf{Ch}(\mathcal{A})^m \to \textbf{Ch}(\mathcal{A})$ is a projection onto a component.
\end{theo}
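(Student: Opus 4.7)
My plan mirrors the two-step structure the author describes: first, apply the nerve theorem to build a simplicial model in which $\Phi$ lifts to a simplicial map with controlled homology; second, use a metric argument on chambers to extract a single dictator. Throughout I assume $\mathcal{A}$ is indecomposable, as the conclusion requires: if $\mathcal{A} = \mathcal{A}_1 \uplus \cdots \uplus \mathcal{A}_n$, IIA decomposes $\Phi$ factor-wise and the problem reduces to each $\mathcal{A}_i$; indecomposability makes $\Gamma(\mathcal{A})$ connected and places every $H$ in a circuit.

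For the first step, IIA gives Boolean functions $\phi_H \colon \{+,-\}^m \to \{+,-\}$ for each $H$, fixing the constants by PAR. Following Baryshnikov~\cite{BARYSHNIKOV1993404}, I would build a simplicial complex associated to $\mathcal{A}$ from the cover $\{C_H^s \mid H \in \mathcal{A},\ s \in \{+,-\}\}$, where $C_H^s$ collects the chambers on side $s$ of $H$; to invoke the nerve theorem in a topologically meaningful way, I would cover a geometric realization such as the unit sphere of $V / \bigcap_{H} H$ by the corresponding spherical half-spaces, whose intersections are convex and hence contractible. The resulting nerve has simplices indexed by realizable sign patterns and is homotopy equivalent to the sphere, so circuits of $\mathcal{A}$ contribute nontrivial cycles in its low-dimensional homology. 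Because $\Phi$ is compatible with each cover via the $\phi_H$, it lifts to a simplicial map between the $m$-fold product complex and the target; tracking a circuit-cycle through the induced map on homology will yield algebraic constraints on the $\phi_H$.

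For the second step, following Manabe~\cite{Mana} I would equip $\textbf{Ch}(\mathcal{A})$ with the Hamming-type metric $d(c,c') = |\{H \in \mathcal{A} \mid \epsilon_H(c) \neq \epsilon_H(c')\}|$. For any profile $(c_1,\ldots,c_m)$, $\Phi(c_1,\ldots,c_m)$ has sign vector $(\phi_H(\epsilon_H(c_1),\ldots,\epsilon_H(c_m)))_{H \in \mathcal{A}}$, and I would combine the homological constraints from the first step with the metric to locate an index $i$ with $d(\Phi(c_1,\ldots,c_m), c_i) = 0$ for all profiles, i.e.\ $\Phi = \mathrm{proj}_i$. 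The concrete mechanism: for each edge $\{H, H'\}$ of $\Gamma(\mathcal{A})$ the circuit containing them forbids some sign pattern, and if the ``local dictators'' $i_H$ and $i_{H'}$ extracted from the first step were distinct, a suitably designed profile would force $\Phi$ to output the forbidden pattern, a contradiction. Connectedness of $\Gamma(\mathcal{A})$ then propagates a single dictator $i$ across all of $\mathcal{A}$, and IIA reassembles $\Phi = \mathrm{proj}_i$.

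The main obstacle is the first step: identifying a circuit-borne cycle in the target nerve that is homologically nontrivial and whose preimage under the induced simplicial map constrains each $\phi_H$ to behave like a coordinate projection. Baryshnikov's construction for the Braid arrangement used very specific permutohedral cycles, and for a general indecomposable $\mathcal{A}$ there is no canonical model — the cycles have to be produced from circuit data alone. A related subtlety is that, since the author explicitly avoids Manabe's reduction to three alternatives, the homological setup must be uniform in $m$, which complicates the compatibility of cycles across the product structure $\textbf{Ch}(\mathcal{A})^m$.
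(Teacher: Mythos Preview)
Your outline differs from the paper's in both steps, and the obstacle you flag is exactly what the paper's method sidesteps. For the first step, the paper does not extract per-hyperplane constraints from circuit cycles in low-dimensional homology. Instead, after using the functorial nerve theorem to identify $|M_1(\mathcal{A})|$ and $|M_m(\mathcal{A})|$ (in homology through degree $r(\mathcal{A})-1$) with $S^{r(\mathcal{A})-1}$ and $(S^{r(\mathcal{A})-1})^m$ respectively, it invokes the elementary sphere identity $\sum_{i=1}^m (\rho_{i,\vec{x}})_* = \Delta_*$ on $H_{r(\mathcal{A})-1}$, where $\rho_{i,\vec{x}}$ inserts into the $i$-th slot and $\Delta$ is the diagonal. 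Composing with the simplicial map induced by $\Phi$ and using PAR ($\Phi \circ \Delta = \mathrm{id}$) gives $\sum_i (\Phi \circ \rho_{i,\vec{c_0}})_* = \mathrm{id}$ on $H_{r(\mathcal{A})-1}(|M_1(\mathcal{A})|)$, so a \emph{single global} index $i_0$ has $\Phi \circ \rho_{i_0,\vec{c_0}}$ of nonzero degree. A short lemma on IIA self-maps of $\textbf{Ch}(\mathcal{A})$ (nonzero degree $\Leftrightarrow$ bijective $\Leftrightarrow$ every $\phi_H \in \{\mathrm{id},\overline{\mathrm{id}}\}$), together with the fixed point $c_0$, then forces $\Phi \circ \rho_{i_0,\vec{c_0}} = \mathrm{id}$. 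Indecomposability enters only through the codimension estimate making the inclusion $U_{\mathcal{A},m} \hookrightarrow (V \setminus \bigcap_i H_i)^m$ an isomorphism on $H_{r(\mathcal{A})-1}$; no explicit circuit cycles are ever handled, and the paper says outright that extending Baryshnikov's concrete-cycle method to this setting ``is not attempted in this study.''

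For the second step, the paper's metric lives on $\mathrm{IIA}_{\mathrm{bij}}(\mathcal{A})$---the Hamming distance on vectors $(\phi_H)_H \in \{\mathrm{id},\overline{\mathrm{id}}\}^{\mathcal{A}}$---and propagation is over \emph{profiles} $\vec{c} \in \textbf{Ch}(\mathcal{A})^{m-1}$, not over hyperplanes. The assignment $\vec{c} \mapsto \Phi \circ \rho_{i_0,\vec{c}}$ is $1$-Lipschitz, sends $\vec{c_0}$ to $\mathrm{id}$, and indecomposability forces every non-identity element of $\mathrm{IIA}_{\mathrm{bij}}(\mathcal{A})$ to sit at distance $\geq 2$ from $\mathrm{id}$; stepping from $\vec{c_0}$ to an arbitrary $\vec{c}$ one separating hyperplane at a time therefore pins the image at $\mathrm{id}$ throughout, yielding $\Phi = \mathrm{proj}_{i_0}$. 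Your proposed mechanism---per-hyperplane local dictators $i_H$ glued along edges of $\Gamma(\mathcal{A})$ via forbidden sign patterns---is essentially Terao's original combinatorial argument dressed in topological language; the present paper's point is precisely to replace that with the global degree identity above, so your flagged obstacle (producing the $i_H$ topologically from circuit data) is real but is a difficulty the paper's proof is designed never to encounter.
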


The purpose of this paper is to give a topological proof of this theorem.

\section{Nerve Theorem}
In this section, we introduce a theorem(Proposition \ref{nerve}) from Bauer, Kerber, Roll, and Rolle \cite{BAUER2023125503}. This theorem will be used in the proof in section \ref{Proof} of this paper.

\begin{defi}[category $\textbf{ClConv}_\textbf{*}$]

We define a category $\textbf{\textit{ClConv}}_\textbf{*}$ as follows.

Object : $(X, \{C_i\}_{i=1}^k, \{b_{i_1i_2\ldots i_r}\})$\\
Here, $X$ is a subset of Euclidean space of arbitrary dimension $n$, and $k$ is a natural number.
$\{C_i\}_{i=1}^k$ is a closed convex finite covering of $\{C_i\}_{i=1}^k$ in $\mathbb{R}^n$ (so we have $\bigcup_{i=1}^{k}C_{i} = X$ in $\mathbb{R}^n$).
Furthermore $\{b_{i_1i_2\ldots i_r}\}$ denotes a point in $\bigcap_{j=1}^{r}C_{i_j}$ chosen for each subset $\{i_1,i_2,\ldots,i_r\} \subseteq \{1,2,\ldots,k\}$ such that $\bigcap_{j=1}^{r}C_{i_j} \neq \emptyset$.

Morphism : Morphism from $(X, \{C_i\}_{i=1}^k, \{b_{i_1i_2\ldots i_r}\})$ to $(Y, \{D_j\}_{j=1}^l, \{c_{j_1j_2\ldots j_s}\})$ is a pair $(f,\phi)$.
Here, $f$ is a continuous map $f \colon X \to Y$ which is affine on each $\{C_i\}$, and $\phi$ is a index map $\phi \colon \{1,2,\dots,k\} \to \{1,2,\ldots,l\}$ which satisfies $f(C_i) \subset D_{\phi(i)}$, $f(b_{i_1i_2\dots i_r}) = c_{\phi(i_1)\phi(i_2)\dots\phi(i_r)}$.

The composition of morphisms and other structures are naturally defined, forming a category.
\end{defi}

\begin{defi}[Forgetful Functor \textbf{Forget}]

We can define a functor 
$\textbf{\textit{Forget}} \colon \textbf{ClConv}_\textbf{*} \to \textbf{Top}$ as follows.\\
$\textbf{Forget}((X, \{C_i\}_{i=1}^k, \{b_{i_1i_2\ldots i_r}\})) = X$.\\
$\textbf{Forget}((f,\phi)) = f$.
\end{defi}

\begin{defi}[Functor \textbf{SdNrv}]
We can define a functor 
$\textbf{\textit{SdNrv}}:\textbf{ClConv}_\textbf{*} \to \textbf{Top}$ as follows.\\
$\textbf{SdNrv}((X, \{C_i\}_{i=1}^k, \{b_{i_1i_2\ldots i_r}\}))=|\textrm{SdNrv}(\{C_i\}_{i=1}^k)|$.\\
$\textbf{SdNrv}((f,\phi))=|\textrm{Sd}(\phi_{*})|$.\\
Here, $\phi_{*} \colon \textrm{Nrv}(\{C_i\}_{i=1}^k). \to \textrm{Nrv}(\{D_j\}_{j=1}^l)$ is a map which is induced from $\phi$, and $\textrm{Sd}(\phi_{*})$ is a map induced from $\phi_{*}$ between barycentric subdivisions.
And, $|\textrm{Sd}(\phi_{*})|$ is a map induced from $\textrm{Sd}(\phi_{*})$ between geometric realizations.
\end{defi}

\begin{defi}[Natural transformation \textbf{$\Gamma$}]\label{gamma}
We can define a natural transformation \textbf{$\mathit{\Gamma}$} from $\textbf{SdNrv}$ to $\textbf{Forget}$ as follows.
For an object $(X, \{C_i\}_{i=1}^k, \{b_{i_1i_2\ldots i_r}\})$ in $\textbf{ClConv}_\textbf{*}$, we associate a map from $|\textrm{SdNrv}(\{C_i\}_{i=1}^k)|$ to $X$ by  mapping vertex $\{C_{i_1},C_{i_2},\ldots,C_{i_r}\}$ in $|\textrm{SdNrv}(\{C_i\}_{i=1}^k)|$ to $\{b_{i_1i_2\ldots i_r}\}$ and extending affinely.
\end{defi}

\begin{prop}[\textbf{$\Gamma$} is object-wise homotopy equivalence, Bauer, Kerber, Roll, Rolle~\cite{BAUER2023125503} Theorem 3.1.]\label{nerve}
Every morphism in \textbf{Top} induced from the natural transformation defined above is homotopy equivalence.
\end{prop}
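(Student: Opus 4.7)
The plan is to treat Proposition~\ref{nerve} as a tethered, functorial refinement of the classical Borsuk Nerve Lemma. The essential geometric input is that every non-empty intersection $\bigcap_{j=1}^{r} C_{i_j}$ of members of the cover is itself a closed convex subset of Euclidean space, and therefore contractible; the purpose of passing to the barycentric subdivision and recording the tether points $\{b_{i_1 i_2 \ldots i_r}\}$ is to pin down one specific homotopy equivalence, rigid enough to be natural with respect to the morphisms of $\textbf{ClConv}_\textbf{*}$.

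First I would verify that the affine extension defining $\Gamma$ really lands inside $X$. A typical simplex $\tau$ of $|\textrm{SdNrv}(\{C_i\}_{i=1}^k)|$ corresponds to a flag $\sigma_1 \subsetneq \sigma_2 \subsetneq \cdots \subsetneq \sigma_p$ of simplices of $\textrm{Nrv}(\{C_i\}_{i=1}^k)$; writing $\sigma_1 = \{C_{i_1}, \ldots, C_{i_r}\}$, every tether point attached to a vertex of $\tau$ lies in the closed convex set $\bigcap_{l=1}^{r} C_{i_l}$, so the affine extension of $\Gamma$ over $\tau$ stays inside this intersection and a fortiori inside $X$.

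Next I would construct a homotopy inverse $\Psi \colon X \to |\textrm{SdNrv}(\{C_i\}_{i=1}^k)|$. Using the distance functions to the closed sets $C_i$ in the ambient $\mathbb{R}^n$, one builds a partition of unity subordinate to the cover and obtains a continuous map $X \to |\textrm{Nrv}(\{C_i\}_{i=1}^k)|$; composing with the canonical homeomorphism $|\textrm{Nrv}(\{C_i\}_{i=1}^k)| \cong |\textrm{SdNrv}(\{C_i\}_{i=1}^k)|$ coming from the barycentric identification then yields $\Psi$. Finally, I would establish $\Gamma \circ \Psi \simeq \mathrm{id}_X$ and $\Psi \circ \Gamma \simeq \mathrm{id}_{|\textrm{SdNrv}|}$ via the Acyclic Carrier Theorem. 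The natural choice of carrier assigns to each simplex $\tau$ of $|\textrm{SdNrv}|$ the intersection $\bigcap_{l=1}^{r} C_{i_l}$ described above; since this set is contractible and contains the relevant images, the theorem delivers both homotopies at once.

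The hard part will be reconciling the tethered map $\Gamma$, whose vertex values are prescribed by the arbitrary choice of points $b_{i_1 \ldots i_r}$, with the partition-of-unity inverse $\Psi$, whose values are weighted averages of abstract nerve vertices. This is precisely why Bauer, Kerber, Roll, and Rolle pass to the barycentric subdivision and store the tether data inside the objects of $\textbf{ClConv}_\textbf{*}$: it is exactly this subdivided, tethered setup that makes both compositions carriable by the same acyclic carrier on the ambient space, and this is what simultaneously yields the homotopy equivalence and the naturality needed in the application to admissible maps in Section~\ref{Proof}.
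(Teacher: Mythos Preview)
The paper does not prove Proposition~\ref{nerve} at all: its entire proof consists of the citation ``Bauer, Kerber, Roll, Rolle~\cite{BAUER2023125503} Theorem 3.1.'' The result is imported as a black box. Your proposal therefore goes well beyond what the paper does; you are reconstructing the cited theorem rather than reproducing the paper's argument.

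As for the content of your sketch, the outline is the standard one and is essentially correct in spirit: convexity of the intersections gives contractibility, the flag description shows $\Gamma$ lands in $X$, a partition of unity subordinate to the closed cover gives a candidate inverse, and an acyclic-carrier style argument furnishes the two homotopies. One point to tighten: you speak of ``the canonical homeomorphism $|\textrm{Nrv}| \cong |\textrm{SdNrv}|$ coming from the barycentric identification,'' but that identification already depends on a choice of vertex ordering or barycentres, so you should be explicit about which map you mean and why it interacts correctly with the tether points $b_{i_1\ldots i_r}$; otherwise the claim that both composites are carried by the \emph{same} carrier is asserted rather than checked. That said, for the purposes of this paper none of this is required: the author is content to cite \cite{BAUER2023125503}, and your task is only to compare with that.
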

\begin{proof}
Bauer, Kerber, Roll, Rolle~\cite{BAUER2023125503} Theorem 3.1.
\end{proof}

\section{Simplicial complex based on Manabe}\label{Manabe}

In this section, we construct a simplicial complex $M_m(\mathcal{A})$ from a given hyperplane arrangement $\mathcal{A}$ and a natural number $m$ (Definition \ref{ManabeSimp}).
This is inspired by the simplicial complex $K$ described in Manabe \cite{Mana}.

\begin{defi}[separate]
Let $\mathcal{A}$ be a real central hyperplane arrangement in $V$.\\
Let $m$ be a natural number.\\
For an element $H$ in $\mathcal{A}$ and a subset $T \subseteq \textbf{Ch}(\mathcal{A})^m$, we say $\textbf{Ch}(\mathcal{A})$ \textit{separate} $T$ when $|(\epsilon_H)^m(T)| \geq 2$.
\end{defi}

\begin{defi}[$M_m(\mathcal{A})$]\label{ManabeSimp}
Let $\mathcal{A}$ be a real central hyperplane arrangement in $V$.\\
Let $m$ be a natural number.\\
We construct a simplicial complex $\mathit{M_m(\mathcal{A})}$ as follows.

Vertex set : $\textbf{Ch}(\mathcal{A})^m$\\

The condition for a simplex : Subset $T \subseteq M_m(\mathcal{A})$ is an element of  $M_m(\mathcal{A})$ if and only if there is at least one element of $\mathcal{A}$ which does not separate $T$.
Since this condition is preserved by the operation of taking subsets, a face of a simplex is still a simplex.
Therefore this satisfies the axioms of an abstract simplicial complex.
\end{defi}

\begin{prop}[IIA map induces a map between simplicial complex]
Let $\mathcal{A}$ be a real central hyperplane arrangement in $V$.\\
Let $m$, $l$ be a natural number.\\
If $f \colon \textbf{Ch}(\mathcal{A})^m \to \textbf{Ch}(\mathcal{A})^l$ satisfies IIA, $f$ induces a simplicial map $\soejitilde{0}{0.5}{-0.1}{1.2}{f}{M}\colon M_m(\mathcal{A}) \to M_{l}(\mathcal{A})$.
\end{prop}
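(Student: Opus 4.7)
The plan is to define $\soejitilde{0}{0.5}{-0.1}{1.2}{f}{M}$ to coincide with $f$ on vertex sets (both complexes have vertex sets $\textbf{Ch}(\mathcal{A})^m$ and $\textbf{Ch}(\mathcal{A})^l$ respectively, which is exactly the source and target of $f$), and then verify that $f$ sends simplices of $M_m(\mathcal{A})$ to simplices of $M_l(\mathcal{A})$. Since a simplicial map on an abstract simplicial complex is determined by its action on vertices together with the condition that the image of every simplex is a simplex, this is all that needs checking.

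First I would fix an arbitrary simplex $T$ of $M_m(\mathcal{A})$. By Definition~\ref{ManabeSimp}, there exists some $H \in \mathcal{A}$ which does not separate $T$; that is, $|(\epsilon_H)^m(T)| = 1$. The goal is to exhibit an element of $\mathcal{A}$ that fails to separate $f(T)$, and the natural candidate is the same $H$.

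Next, I would apply the IIA hypothesis (Definition~\ref{IIA2}) at this $H$. IIA furnishes a map $\phi_H \colon \textbf{Ch}(\{H\})^m \to \textbf{Ch}(\{H\})^l$ with
\[
(\epsilon_H)^l \circ f \;=\; \phi_H \circ (\epsilon_H)^m.
\]
Applying both sides to $T$ yields
\[
(\epsilon_H)^l(f(T)) \;=\; \phi_H\bigl((\epsilon_H)^m(T)\bigr),
\]
and since the right-hand side is the image under $\phi_H$ of a one-element set, it also has cardinality one. Hence $H$ does not separate $f(T)$, so $f(T)$ is a simplex of $M_l(\mathcal{A})$ by Definition~\ref{ManabeSimp}.

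There is essentially no serious obstacle here; the argument is a direct unpacking of the IIA commutative square. The only subtlety worth noting is that $f$ need not be injective on $T$, so $f(T)$ may be a strictly lower-dimensional simplex of $M_l(\mathcal{A})$ than $T$; this is permitted for simplicial maps and does not affect the argument. Once simplices are preserved, the induced map $\soejitilde{0}{0.5}{-0.1}{1.2}{f}{M}$ is a well-defined simplicial map as required.
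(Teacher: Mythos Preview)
Your proof is correct and is precisely the unpacking of the definitions that the paper has in mind; the paper's own proof consists only of the sentence ``This is clear from definition.'' Your argument supplies exactly those details via the IIA commutative square, so there is nothing to add.
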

\begin{proof}
This is clear from definition.
\end{proof}

\section{Simplidial complex based on Baryshnikov}\label{Baryshnikov}

In this section we construct a simplicial complex $B_m(\mathcal{A})$ from a given hyperplane arrangement $\mathcal{A}$ and a natural number $m$ (Definition \ref{BaryshnikovSimp}).
This is a complete generalization of simplicial complexes ${N_W}$ and ${N_P}$ in Baryshnikov~\cite{BARYSHNIKOV1993404}.

\begin{defi}[$\mathfrak{U}_{\mathcal{A},m}$, $U_{\mathcal{A},m}$]
Let $\mathcal{A}$ be a real central hyperplane arrangement in $V$.\\
Let \textit{m} be a natural number.\\
We define $\mathfrak{U}_{\mathcal{A},m}$ as  $\{c_1 \times c_2 \times \cdots \times c_m \subset V^m |c_1,c_2,\ldots,c_m\in \textbf{Ch}(\{H\}),H\in\mathcal{A}\} $ and define $U_m$ as the union of all elements of $\mathfrak{U}_{\mathcal{A},m}$.
$U_m$ is an open subset of $V^m$.
\end{defi}

\begin{defi}[$\mathfrak{A}_{\mathcal{A},m}$, $A_{\mathcal{A},m}$]\label{A}
\footnote{We introduce this concept for just a technical reason. In the proof of our main theorem, we want to use functorial Nerve theorem. We use this definition and use a ``closed'' covering to ensure the functoriality. There might be a simpler method, but considering that the technical difficulties here are not essential to the purpose of this paper, we have not explored that possibility.}
Let $\mathcal{A}$ be a real central hyperplane arrangement in $V$.\\
For each element $H$ of $\mathcal{A}$, we fix an element $\alpha_{H}$ of $V^{*}$ which satisfies ker($\alpha_{H}$) = $H$.
Let $m$ be a natural number.\\
We define $\mathfrak{A}_{\mathcal{A},m}$ as $\{a_1 \times a_2 \times \cdots \times a_m \subset V^m |a_1,a_2,\ldots,a_m\in \textbf{A}(\{H\}),H\in\mathcal{A}\}$ and define $A_m$ as the union of all elements of $\mathfrak{A}_{\mathcal{A},m}$.
Here, $\textbf{A}(\{H\}) = \{ c_0\cap\{\vec{x} \in V | |\alpha_{H}(\vec{x})| \geq 1\},  c_1\cap\{\vec{x} \in V | |\alpha_{H}(\vec{x})| \geq 1\}\}$ ($c_{0}$ and $c_{1}$ are two chambers of $\{H\}$)
$A_m$ is a closed subset of $V^m$.
\end{defi}

\begin{defi}[$B_m(\mathcal{A})$]\label{BaryshnikovSimp}
Let $\mathcal{A}$ be a real central hyperplane arrangement in $V$.\\
Let $m$ be a natural number.\\
Since $\mathfrak{U}_{\mathcal{A},m}$ is a covering of $U_m$, we can construct a simplicial complex $B_m(\mathcal{A})$ as a nerve of this covering $\textrm{Nrv}(\mathfrak{U}_{\mathcal{A},m}) (= \textrm{Nrv}(\mathfrak{A}_{\mathcal{A},m})\footnote{By definition, we can use $\mathfrak{A}_{\mathcal{A},m}$ as a covering of $A_m$ instead of $\mathfrak{U}_{\mathcal{A},m}$ in order to get $B_m(\mathcal{A})$.})$.
\end{defi}

\begin{prop}[structure of $U_{\mathcal{A},m}$]\label{str}
Let $\mathcal{A}$ be a real central hyperplane arrangement in $V$.\\
Let $m$ be a natural number.\\
We have the following.\\

\noindent
$(1)$ $U_m = V^m - \underset{(i_1,i_2,\ldots,i_n)\in\{1,2,\ldots,m\}^n}{\bigcup}(\pi^{-1}_{i_1}(H_1)\cap\pi^{-1}_{i_2}(H_2)\cap \ldots \cap\pi^{-1}_{i_n}(H_n))$ 

\noindent
$(2)$ $\textrm{codim}(\pi^{-1}_{i_1}(H_1)\cap\pi^{-1}_{i_2}(H_2)\cap \ldots \cap\pi^{-1}_{i_n}(H_n)) \geq r(\mathcal{A})$

\noindent
$(3)$ The equation holds in $(\mathrm{2})$ 
$\iff$ $s \colon \mathcal{A} \to \{1,2,\ldots,m\} \,\,(s(H_k) = i_k)$ is constant on each connected component of $\Gamma(\mathcal{A})$.
\end{prop}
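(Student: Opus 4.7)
The plan is to reduce all three parts to an algebraic identification of the codimension with a sum of ranks of fibers of an index function, combined with subadditivity of rank and uniqueness of the indecomposable decomposition.

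For (1), I would just unpack definitions set-theoretically. A point $(x_1,\ldots,x_m)\in V^m$ lies in $U_m$ iff there exists $H\in\mathcal{A}$ such that every coordinate $x_j$ lies off $H$. Taking complements, $(x_1,\ldots,x_m)\notin U_m$ iff for each $H\in\mathcal{A}$ some coordinate lies on $H$, which is to say there is a choice of indices $(i_1,\ldots,i_n)$ with $x_{i_k}\in H_k$ for every $k$. This is precisely membership in $\bigcap_{k=1}^{n}\pi_{i_k}^{-1}(H_k)$, so taking the union over all such choices recovers the stated formula.

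For (2), fix $(i_1,\ldots,i_n)$ and define $s\colon\mathcal{A}\to\{1,\ldots,m\}$ by $s(H_k)=i_k$. The intersection in question is the common zero locus of the linear forms $L_k(x_1,\ldots,x_m)=\alpha_{H_k}(x_{i_k})$ on $V^m$. Grouping these forms by $j=s(H_k)$, the forms attached to coordinate $j$ span a subspace of the $j$-th factor of $(V^m)^{*}\cong(V^{*})^{\oplus m}$ of dimension $r(s^{-1}(j))$ (using the identity $r(\mathcal{B})=\dim\mathrm{span}\{\alpha_H:H\in\mathcal{B}\}$), and subspaces attached to distinct coordinates lie in different summands. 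Hence the codimension equals $\sum_{j=1}^{m}r(s^{-1}(j))$. Subadditivity of rank (immediate from the same identity) then gives $r(\mathcal{A})=r(\bigsqcup_{j}s^{-1}(j))\leq\sum_{j}r(s^{-1}(j))$, which is the required inequality.

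For (3), let $\mathcal{A}=\mathcal{A}_1\uplus\cdots\uplus\mathcal{A}_k$ be the decomposition into indecomposables, which by the cited fact is the partition into connected components of $\Gamma(\mathcal{A})$. For the forward direction, if $s$ is constant on each $\mathcal{A}_i$, then each nonempty $s^{-1}(j)$ is a disjoint union of some $\mathcal{A}_i$'s, and a short subadditivity argument (applied to this subunion and its complement, using $r(\mathcal{A})=\sum_i r(\mathcal{A}_i)$) forces $r\bigl(\bigsqcup_{i\in I}\mathcal{A}_i\bigr)=\sum_{i\in I}r(\mathcal{A}_i)$; summing over $j$ yields $\sum_{j}r(s^{-1}(j))=r(\mathcal{A})$. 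For the converse, assume equality in (2) and write $s^{-1}(j)=\bigsqcup_i(s^{-1}(j)\cap\mathcal{A}_i)$. Two applications of subadditivity combined with $r(\mathcal{A})=\sum_i r(\mathcal{A}_i)$ sandwich $\sum_i\sum_j r(s^{-1}(j)\cap\mathcal{A}_i)$ between two copies of $r(\mathcal{A})$, so equality holds everywhere, giving $\mathcal{A}_i=\biguplus_j(s^{-1}(j)\cap\mathcal{A}_i)$ as a valid decomposition. Indecomposability of $\mathcal{A}_i$ then forces only one summand to be nonempty, i.e.\ $s$ is constant on each connected component.

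The main obstacle I anticipate is the codimension identification in (2): it requires cleanly exploiting the direct sum structure of $(V^m)^{*}$ so that the sum of ranks emerges as an equality rather than an inequality. Once that is in place, the remaining arguments are pure bookkeeping with subadditivity of rank and the uniqueness statement in the indecomposable decomposition.
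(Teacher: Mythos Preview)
Your arguments for (1) and (2) are correct and essentially identical to the paper's: the paper also groups by the fibers of $s$, writes the intersection as $\bigcap_{j}\pi_j^{-1}\bigl(\bigcap_{H\in s^{-1}(j)}H\bigr)$, and obtains $\mathrm{codim}=\sum_j r(s^{-1}(j))\ge r(\mathcal{A})$ by subadditivity of rank. Your dual phrasing via linear forms in $(V^*)^{\oplus m}$ is the same computation.

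For (3), however, your ``sandwich'' in the converse direction does not close as written. Both applications of subadditivity you invoke,
\[
r(\mathcal{A})=\sum_j r(s^{-1}(j))\le \sum_{i,j} r\bigl(s^{-1}(j)\cap\mathcal{A}_i\bigr)
\quad\text{and}\quad
r(\mathcal{A})=\sum_i r(\mathcal{A}_i)\le \sum_{i,j} r\bigl(s^{-1}(j)\cap\mathcal{A}_i\bigr),
\]
are \emph{lower} bounds for the double sum, so no upper bound emerges and you cannot conclude $r(\mathcal{A}_i)=\sum_j r(s^{-1}(j)\cap\mathcal{A}_i)$ from this alone. The easy repair is to use what you already observed in (2): since $\mathcal{A}=\biguplus_i\mathcal{A}_i$, the spans $\mathrm{span}\{\alpha_H:H\in\mathcal{A}_i\}$ lie in independent summands of $V^*$, so in fact $r(\mathcal{B})=\sum_i r(\mathcal{B}\cap\mathcal{A}_i)$ for \emph{every} subarrangement $\mathcal{B}$. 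Applying this with $\mathcal{B}=s^{-1}(j)$ turns your first inequality into an equality, and then the second inequality is squeezed to equality termwise, giving $\mathcal{A}_i=\biguplus_j(s^{-1}(j)\cap\mathcal{A}_i)$ as you wanted. Alternatively, and this is what the paper does, simply note that equality in (2) is by definition the statement $\mathcal{A}=\biguplus_j s^{-1}(j)$; decomposing each $s^{-1}(j)$ into indecomposables and invoking uniqueness of the indecomposable decomposition (the cited Fact) immediately forces each $\mathcal{A}_i$ into a single fiber.
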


\begin{proof}
\noindent
(1)
\begin{align*}
U_m &= \underset{U\in\mathfrak{U}_{\mathcal{A},m}}{\bigcup{U}}\\ 
&=\{(\vec{x_1},\vec{x_2},\ldots,\vec{x_m})\in V^m|\exists H \in \mathcal{A}, \forall i \in \{1,2,\ldots,m\}, \vec{x_i} \notin H \}\\
&= V^m-\{(\vec{x_1},\vec{x_2},\ldots,\vec{x_m})\in V^m|\forall H \in \mathcal{A}, \exists i \in \{1,2,\ldots,m\}, \vec{x_i} \in H \}\\
&=V^m-{\bigcup_{\substack{(i_1,i_2,\ldots,i_n)\in\{1,2,\ldots,m\}^n}}}{(\pi_{i_1}^{-1}(H_1)\cap\pi_{i_2}^{-1}(H_2)\cap \ldots \cap\pi_{i_n}^{-1}(H_n))}
\end{align*}

Here, $\mathcal{A} = \{H_1, H_2,\ldots,H_n\}$.\\

(2)(3)\\

We have 
\begin{align*}
\pi_{i_1}^{-1}(H_1)\cap\pi_{i_2}^{-1}(H_2)\cap \ldots \cap\pi_{i_n}^{-1}(H_n) &= \bigcap_{j=1}^{m}\pi_{j}^{-1}(\bigcap_{H\in s^{-1}(j)}H)
\end{align*}

(If $s^{-1}(j) = \emptyset$,we assume $\bigcap_{H\in s^{-1}(j)}H = V$)\\

Therefore

\begin{align*}
\textrm{codim}_{V^m}(\bigcap_{j=1}^{m}\pi_{j}^{-1}(\bigcap_{H\in s^{-1}(j)}H))
&=\sum_{j=1}^{m}\textrm{codim}_{V}(\bigcap_{H\in s^{-1}(j)}H)\\
&\geq \textrm{codim}_{V}(\bigcap_{j=1}^{m}\bigcap_{H\in s^{-1}(j)}H)\\
&=r(\mathcal{A})\\
\end{align*}

This equality holds if and only if $\mathcal{A} = \biguplus_{j=1}^{m}s^{-1}(j)$. This is equivalent to the condition that $s \colon \mathcal{A} \to \{1,2,\ldots,m\} \,\,(s(H_k) = i_k)$ is constant on each connected component of $\Gamma(\mathcal{A})$.

\end{proof}


\section{Relation between $M_m(\mathcal{A})$ and $B_m(\mathcal{A})$}

\begin{defi}[dual of simplicial complex]
Let $K$ be a simplicial complex such that the union of all simplicies with maximum dimension is $K$.
In other words, the set of all simplicies with maximum dimension becomes a covering of $K$.
In this case, we define the \textit{dual} of $K$, $K^*$ as the nerve of this covering.
\end{defi}

\begin{prop}[$M_m(\mathcal{A})$ can be identified with the dual of $B_m(\mathcal{A})$]
Let $\mathcal{A}$ be a real central hyperplane arrangement in $V$.
Let $m$ be a natural number.
$B_m(\mathcal{A})$ is a simplicial complex such that the set of all simplicies with maximum dimension becomes its covering.
We have a natural one-to-one correspondence of elements between $B_m$ $B_m(\mathcal{A})$ and $\textbf{Ch}(\mathcal{A})^m$, and we can think of  $M_m(\mathcal{A})$ as the dual of $B_m(\mathcal{A})$.
\end{prop}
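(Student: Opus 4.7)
The plan is to make the claimed correspondence completely explicit on vertices and on simplices, and then verify the technical covering hypothesis that makes the word ``dual'' meaningful.

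First, I would unpack the combinatorics of $B_m(\mathcal{A})$. A vertex of $B_m(\mathcal{A})$ is an element of $\mathfrak{U}_{\mathcal{A},m}$, i.e.\ a product $c_1 \times \cdots \times c_m$ with $c_i \in \textbf{Ch}(\{H\})$ for a common $H \in \mathcal{A}$; equivalently, it is a pair $(H, \vec{\epsilon})$ with $H \in \mathcal{A}$ and $\vec{\epsilon} \in \{+,-\}^m$ encoding a choice of side of $H$ in each coordinate. Given a chamber tuple $(d_1,\ldots,d_m) \in \textbf{Ch}(\mathcal{A})^m$, associate the set of vertices $\sigma(d_1,\ldots,d_m) := \{(H, \epsilon_H(d_1), \ldots, \epsilon_H(d_m)) : H \in \mathcal{A}\}$. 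The intersection in $V^m$ of the corresponding products is exactly $d_1 \times \cdots \times d_m$, which is non-empty, so $\sigma(d_1,\ldots,d_m)$ is a simplex. Adding any further vertex $(H, \vec{\epsilon'})$ would, in some coordinate $i$, intersect two opposite open half-spaces of $H$, giving an empty intersection; hence $\sigma(d_1,\ldots,d_m)$ is maximal and of dimension $|\mathcal{A}|-1$.

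Second, I would verify that these maximal simplices exhaust $B_m(\mathcal{A})$, which is precisely the hypothesis required to define the dual. Given any simplex $\{(H_j,\vec{\epsilon}^j)\}_{j=1}^k$, its intersection in $V^m$ decomposes coordinate-wise into intersections of open half-spaces, each of which is a non-empty open convex subset of $V$. Since the complement of the finite union $\bigcup_{H\in\mathcal{A}} H$ is dense in $V$, each coordinate intersection contains a point lying in some chamber $d_i \in \textbf{Ch}(\mathcal{A})$. Then $\sigma(d_1,\ldots,d_m)$ is a maximal simplex containing the original simplex. Consequently, every simplex of $B_m(\mathcal{A})$ lies in some $\sigma(d_1,\ldots,d_m)$, the maximal simplices cover $B_m(\mathcal{A})$, and the assignment $(d_1,\ldots,d_m) \mapsto \sigma(d_1,\ldots,d_m)$ gives the promised bijection between $\textbf{Ch}(\mathcal{A})^m$ and the vertex set of $B_m(\mathcal{A})^*$, matching the vertex set of $M_m(\mathcal{A})$.

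Third, I would match the higher-dimensional simplex conditions. A subset $T \subseteq \textbf{Ch}(\mathcal{A})^m$ is a simplex of the dual $B_m(\mathcal{A})^*$ iff the maximal simplices $\{\sigma(\vec{d}) : \vec{d} \in T\}$ share a common face, equivalently a common vertex $(H,\vec{s})$ of $B_m(\mathcal{A})$. By the vertex description, such an $(H,\vec{s})$ exists iff for some $H \in \mathcal{A}$ the signs $\epsilon_H(d_i^{(j)})$ agree across all tuples $\vec{d}^{(j)} \in T$ in each coordinate $i$, i.e.\ $|(\epsilon_H)^m(T)| = 1$. This is exactly the condition that $H$ does not separate $T$, which is the simplex condition of $M_m(\mathcal{A})$ from Definition~\ref{ManabeSimp}. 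The two simplicial complexes therefore coincide on vertices and simplices, completing the identification.

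The main obstacle, though technically mild, is the covering step: one must know that every simplex of $B_m(\mathcal{A})$ is contained in a maximal one indexed by a chamber tuple of $\mathcal{A}$, which is the only place where the geometry of $V^m$ (rather than pure sign combinatorics) is used, via the density of $V \setminus \bigcup_{H \in \mathcal{A}} H$. Everything else is bookkeeping between the product $\mathfrak{U}_{\mathcal{A},m}$-description of vertices and the side-function $\epsilon_H$.
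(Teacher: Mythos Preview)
Your argument is correct and is exactly the unpacking of the definitions that the paper has in mind; the paper's own proof is the single line ``This is clear from definition,'' and your three steps (identifying $\sigma(\vec d)$ with the $(|\mathcal{A}|-1)$-simplices, showing every simplex sits in one via density of $V\setminus\bigcup H$, and matching the nerve condition with the ``not separated by some $H$'' condition) are precisely what makes that clear.
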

\begin{proof}
This is clear from definition.
\end{proof}

\section{Topological proof of Terao's extended Arrow's impossibility theorem}\label{Proof}

Let $\mathcal{A} = \{H_1, H_2, \ldots, H_n\}$ be a central hyperplane arrangement in a real vector space $V$, and let $n =|\mathcal{A}|\ge3$.
Since we use $A_{\mathcal{A},m}$ in the following discussion, we fix an element of $V^{*}$ for each element $H$ in $\mathcal{A}$.
Let $m$ be a natural number, and $\Phi \colon \textbf{Ch}(\mathcal{A})^m \to \textbf{Ch}(\mathcal{A})$ be an admissible map.
We want to show that $\Phi$ is a projection map onto a certain component.

First, we relate $M_m(\mathcal{A})$ and $B_m(\mathcal{A})$ to the sphere in a topological point of view.
We use the nerve theorem for that purpose.
The first key step is to prove Proposition \ref{1}. 
This part is heavily informed by Baryshnikov~\cite{BARYSHNIKOV1993404}. 

\subsection{A commutative diagram}
We can construct the following two commutative diagrams (Figure \ref{kakanzusiki_1}, \ref{kakanzusiki_2}) for $\vec{c} = (c_1,c_2,\ldots,c_{m-1}) \in \textbf{Ch}(\mathcal{A})^{m-1}$.
These commutative diagrams are largely the same, except for the mappings indicated by the dotted lines.\\
These diagrams relate simplicial complexes $M_{\bullet}(\mathcal{A})$ and $B_{\bullet}(\mathcal{A})$ which are constructed from combinatorial information of Arrow's impossibility theorem 
to spheres functorially in a topological point of view. In this sense, this diagram plays the role of a bridge between combinatorics and topology.
\begin{figure}[H]

\includegraphics{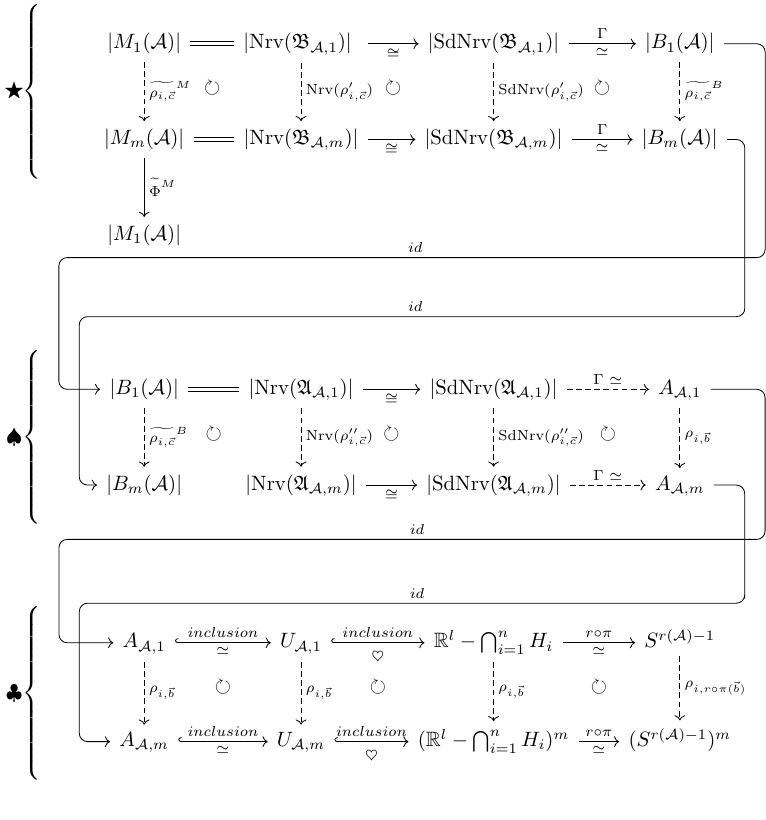}

\caption{a commutative diagram}\label{kakanzusiki_1}

\end{figure}
\begin{figure}[H]

\includegraphics{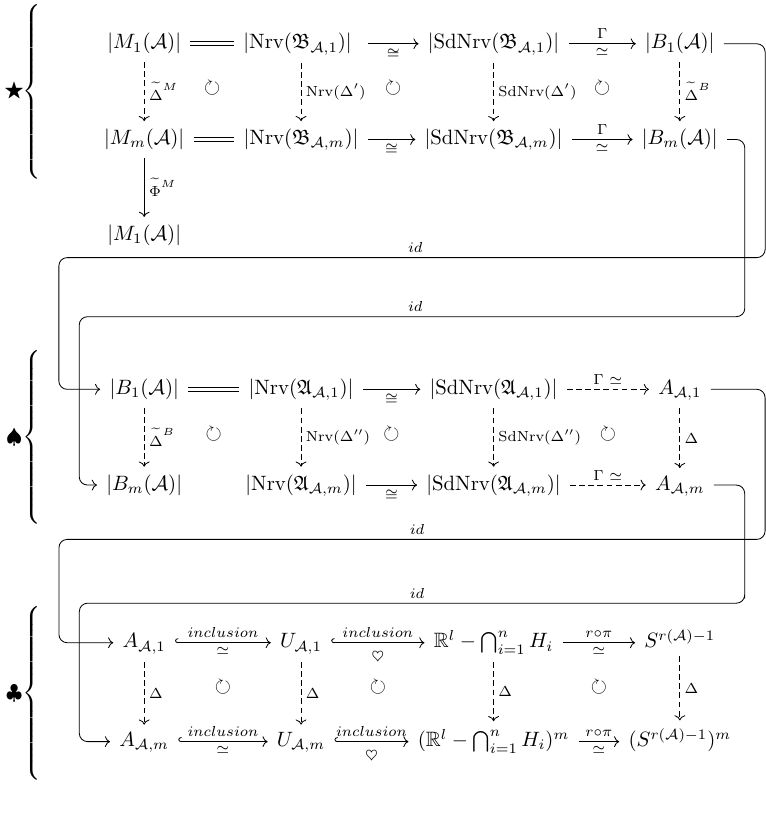}

\caption{a commutative diagram}\label{kakanzusiki_2}

\end{figure}

We explain some notations in Figure \ref{kakanzusiki_1}, \ref{kakanzusiki_2}.
\begin{itemize}
\item$\rho_{i,\vec{c}}$ and $\Delta$ are maps defined as follows.

\[
    \begin{array}{r@{\,\,}c@{\,\,}c@{\,\,}c}
       \rho_{i,\vec{c}}\colon&\textbf{Ch}(\mathcal{A})&\xrightarrow{\qquad\qquad\qquad}&\textbf{Ch}(\mathcal{A})^m\\
       &\rotatebox{90}{$\in$}&&\rotatebox{90}{$\in$}\\
       &c&\xmapsto{\qquad\qquad\qquad}&(c_1,c_2,...,c_{i-1},c,c_{i},\ldots,c_{m-1})
    \end{array}
\]

\[
\hspace{-2.7cm}
    \begin{array}{r@{\,\,}c@{\,\,}c@{\,\,}c}
       \Delta\colon&\textbf{Ch}(\mathcal{A})&\xrightarrow{\qquad\qquad\qquad}&\textbf{Ch}(\mathcal{A})^m\\
       &\rotatebox{90}{$\in$}&&\rotatebox{90}{$\in$}\\
       &c&\xmapsto{\qquad\qquad\qquad}&(c,c,\ldots,c)
    \end{array}
\]

Since it is clear from definition that $\rho_{i,\vec{c}}$ and $\Delta$ satisfy IIA, each of them is uniquely extended to a simplicial map from $|M_1(\mathcal{A})|$ to $|M_m(\mathcal{A})|$.
We denote them as 
$\Soejitilde{0}{0.5}{-0.2}{0.8}{\rho_{i,\vec{c}}}{M}$
 and 
$\Soejitilde{0}{0}{-0.2}{1.2}{\Delta}{M}$.\\
Similarly $\Phi$ can be uniquely extended to a simplicial map from $|M_1(\mathcal{A})|$ to $|M_m(\mathcal{A})|$. We denote it as 
$\Soejitilde{0}{0}{-0.2}{1.2}{\Phi}{M}$

\item$\mathfrak{B}_{\mathcal{A},k}$ is the set of maximum dimensional simplicies of $B_k(\mathcal{A})$.

\item We define simplicial maps
$\Soejitilde{0}{0.5}{-0.2}{0.8}{\rho_{i,\vec{c}}}{M}$
 and 
 $\Soejitilde{0}{0}{-0.2}{1.2}{\Delta}{M}$ as follows.\\
As defined earlier, $B_1(\mathcal{A})$ is the nerve of $\mathfrak{A}_{\mathcal{A},1}$ (a covering of $A_1$), and $B_m(\mathcal{A})$ is the nerve of $\mathfrak{A}_{\mathcal{A},m}$ (a covering of $A_m$).\\
Let $a$ be an element of $\mathfrak{A}_{\mathcal{A},1}$. We denote a map which sends $a$ to $a_1\times a_2\times \cdots \times a_{i-1}\times a\times a_i\times \cdots \times a_{m-1}$ as $\rho_{i,\vec{c}}^{\prime\prime} \colon \mathfrak{A}_{\mathcal{A},1} \to \mathfrak{A}_{\mathcal{A},m}$.
Here, each $a_k (1\leq k)$ is a unique element of $\mathfrak{A}_{\mathcal{A},1}$ which is contained in $\epsilon_{H}(c_{k})$.
Here, $c_k (1\leq k)$ are chanbers of $\mathcal{A}$ fixed above in order to create this diagram.
Furthermore, $\Delta^{\prime\prime} \colon \mathfrak{A}_{\mathcal{A},1} \to \mathfrak{A}_{\mathcal{A},m}$ is a map which sends $c$ to $c\times c\times \cdots \times c$.\\
Since for some elements of $\mathfrak{A}_{\mathcal{A},1}$, $d_1,\ldots,d_k$, both $\rho_{i,\vec{c}}^{\prime\prime}$ and $\Delta^{\prime\prime}$ satisfy $\bigcap_{i=1}^{k}d_{i} \neq \emptyset \Rightarrow \bigcap_{i=1}^{k}\rho_{i,\vec{c}}^{\prime\prime}(d_{i}) \neq \emptyset$ , $\bigcap_{i=1}^{k}d_{i} \neq \emptyset \Rightarrow \bigcap_{i=1}^{k}\Delta^{\prime\prime}(d_{i}) \neq \emptyset$, each of them can be uniquely extended to a simplicial map from $|B_1(\mathcal{A})|$ to $|B_m(\mathcal{A})|$. We denote them as 
$\Soejitilde{0}{0.5}{-0.2}{0.8}{\rho_{i,\vec{c}}}{B}$
 and 
 $\Soejitilde{0}{0}{-0.2}{1.2}{\Delta}{B}$.

\item
$\rho_{i,\vec{c}}^{\prime}$ and $\Delta^{\prime}$ are maps from $\{|\sigma|\}_{\sigma \in \mathfrak{B}_{\mathcal{A},1}}$ to $\{|\sigma|\}_{\sigma \in \mathfrak{B}_{\mathcal{A},m}}$ induced from $\Soejitilde{0}{0.5}{-0.2}{0.8}{\rho_{i,\vec{c}}}{B}$ and  $\Soejitilde{0}{0}{-0.2}{1.2}{\Delta}{B}$.

\item
$| \cdot |$ represents geometric realization of abstract simplicial complex.

\item
Let $\mathfrak{U}$ be some covering. $\textrm{Nrv}(\mathfrak{U})$ is the nerve of $\mathfrak{U}$. This is an abstract simplicial complex whose vertex set is $\mathfrak{U}$.
$\textrm{SdNrv}(\mathfrak{U})$ is the barysentric subdivision of $\textrm{Nrv}(\mathfrak{U})$. This is an abstract simplicial complex whose vertex is simplex of $\textrm{Nrv}(\mathfrak{U})$ and whose simplex is ascending chain of simplexes of $\textrm{Nrv}(\mathfrak{U})$. When we have a map between covering $f \colon \mathfrak{U} \to \mathfrak{V}$.
$\textrm{Nrv}(f) \colon |\textrm{Nrv}(\mathfrak{U})| \to |\textrm{Nrv}(\mathfrak{V})|$ is the affine expansion of $f$. We can also induce $\textrm{SdNrv}(f) \colon |\textrm{SdNrv}(\mathfrak{U})| \to |\textrm{SdNrv}(\mathfrak{V})|$ from $f$.

\item
$\vec{b}$ is introduced in subsection \ref{2dannme}

\item $\pi \colon \mathbb{R}^l \to (\bigcap_{i=1}^{n}H_{i})^{\perp}$ is the orthogonal projection, and $r$ is a deformation retract $r(x) = x/|x|$.

\item Each $\Gamma$ in the diagram above denotes a map that is induced from the natural transformation $\Gamma$ (Proposition \ref{nerve})

\end{itemize}

\subsection{the ($\bigstar$) part in Figure \ref{kakanzusiki_1}, \ref{kakanzusiki_2} (the first nerve theorem)}
Let us consider $(|B_1(\mathcal{A})|,\mathfrak{B}_{\mathcal{A},1},\{b_{\sigma}\}_{\sigma \in \mathfrak{B}_{\mathcal{A},1}})$ and $(|B_m(\mathcal{A})|,\mathfrak{B}_{\mathcal{A},m},\{b_{\sigma}\}_{\sigma \in \mathfrak{B}_{\mathcal{A},m}})$, objects in category $\textbf{ClConv}_\textbf{*}$, and morphisms 
$(\Soejitilde{0}{0.5}{-0.2}{0.8}{\rho_{i,\vec{c}}}{B},  \rho_{i,\vec{c}}^{\prime})$ and $(\Soejitilde{0}{0}{-0.2}{1.2}{\Delta}{B},\Delta^{\prime})$

  between them.
Here, $b_{\sigma}$ denotes the barycenter of $\sigma$.

It is clear that $(|B_1(\mathcal{A})|,\mathfrak{B}_{\mathcal{A},1},\{b_{\sigma}\}_{\sigma \in \mathfrak{B}_{\mathcal{A},1}})$ and $(|B_m(\mathcal{A})|,\mathfrak{B}_{\mathcal{A},m},\{b_{\sigma}\}_{\sigma \in \mathfrak{B}_{\mathcal{A},m}})$ are actually determine objects in category $\textbf{ClConv}_\textbf{*}$, and that
$(\Soejitilde{0}{0.5}{-0.2}{0.8}{\rho_{i,\vec{c}}}{B}, \rho_{i,\vec{c}}^{\prime})$ and $(\Soejitilde{0}{0}{-0.2}{1.2}{\Delta}{B},\Delta^{\prime})$ are actually morphisms between them.

The commutativity of the diagram is evident for the middle and right squares and similarly follows easily for the leftmost square from the definition of the mappings and the manner of identification. 
The mappings induced by the two $\Gamma$s in the ($\bigstar$) part are both homotopy equivalent maps by the nerve theorem (Proposition \ref{nerve}).

\subsection{the ($\spadesuit$) part of Figure \ref{kakanzusiki_1}, \ref{kakanzusiki_2} (the second nerve theorem)}\label{2dannme}
First, we fix a set of base points of $\mathfrak{A}_{\mathcal{A},1}$. We denote this as $B$.
Next, for $c \in \textbf{Ch}(\mathcal{A})$, a set of base points $B_{c}$ for $\mathfrak{A}_{\mathcal{A},1}$ is defined as follows.
If $A_1,\ldots,A_k \in \mathfrak{A}_{\mathcal{A},1}(\bigcap_{i=1}^{k}A_i \neq \emptyset)$ satisfies $c\cap\bigcap_{i=1}^{k}A_i \neq \emptyset$, then the base point for $\{A_1,\ldots,A_k\}$ is defined as $\vec{b_c}$, where $\vec{b_c}$ is the base point of $A_{H,c}$ in $B$. Here, $A_{H,c} = \{A \in \mathfrak{A}_{\mathcal{A},1} | A \cap c \neq \emptyset\}$.
For $A_1,\ldots,A_k \in \mathfrak{A}_{\mathcal{A},1}(\bigcap_{i=1}^{k}A_i \neq \emptyset)$ which do not satisfy the above condition, the base point of $\{A_1,\ldots,A_k\}$ in $B_{c}$ is defined to be the same as that in $B$.

Generally, it should be noted that from the sets of base points $B_1,\ldots,B_m$ for the covering $\mathfrak{A}_{\mathcal{A},1}$ of $A_{\mathcal{A},1}$, a set of base point $B_1\times \cdots \times B_m$ for the covering $\mathfrak{A}_{\mathcal{A},m}$ of $A_{\mathcal{A},m}$ is induced naturally.

Let us consider objects $(A_{\mathcal{A},1},\mathfrak{A}_{\mathcal{A},1},B_c)$, $(A_{\mathcal{A},m},\mathfrak{A}_{\mathcal{A},m},B_{c_1}\times B_{c_2}\times \cdots \times B_{c_{i-1}}\times B_{c}\times B_{c_i}\times \cdots \times B_{c_{m-1}})$, and a morphism $(\rho_{i,\vec{b}},  \rho_{i,\vec{c}}^{\prime\prime})$ between them in category $\textbf{ClConv}_\textbf{*}$.
Furthermore, let us also consider objects $(A_{\mathcal{A},1},\mathfrak{A}_{\mathcal{A},1},B_c)$, $(A_{\mathcal{A},m},\mathfrak{A}_{\mathcal{A},m},B_c \times \cdots \times B_c)$, and a morphism $(\Delta,\Delta^{\prime\prime})$ between them in the same category $\textbf{ClConv}_\textbf{*}$.

Here, we define $\vec{b}$ as $\vec{b} = (\vec{b_{c_1}},\vec{b_{c_2}},\ldots,\vec{b_{c_{m-1}}}) \in \textbf{Ch}(\mathcal{A})^{m-1}$, and $\rho_{i,\vec{b}}$ as $\rho_{i,\vec{b}}(\vec{x})= (\vec{b_{c_1}},\vec{b_{c_2}},\ldots,\vec{b_{c_{i-1}}},\vec{x},\vec{b_{c_i}},\ldots,\vec{b_{c_{m-1}}})$. $\Delta$ is the diagonal map.
The choice of base points ensures that these objects and morphisms are actually objects and morphisms of $\textbf{ClConv}_\textbf{*}$.
The commutativity of the diagram is directly follows from definitions and naturality.
By the nerve theorem (Proposition \ref{nerve}), the maps induced by the four $\Gamma$s in the ($\bigstar$) part are all homotopy equivalence.
(Note that the $\Gamma$s defined in Definition \ref{gamma} are homotopic to each other even when the base points are changed. This property plays a significant role when deducing Proposition \ref{1})

\subsection{the ($\clubsuit$) part of Figure \ref{kakanzusiki_1}, \ref{kakanzusiki_2}}

\begin{lemm}[the leftmost two inclusions are homotopy equivalences]
In ($\clubsuit$) part of the Figure $\ref{kakanzusiki_1}$ and $\ref{kakanzusiki_2}$, the leftmost two inclusions of are homotopy equivalences 
\end{lemm}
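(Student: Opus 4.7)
The plan is to construct an explicit strong deformation retraction $F\colon U_{\mathcal{A},m}\times[0,1]\to U_{\mathcal{A},m}$ whose image at $t=1$ lies in $A_{\mathcal{A},m}$ and which is stationary on $A_{\mathcal{A},m}$. The construction works uniformly for both inclusions $A_{\mathcal{A},1}\hookrightarrow U_{\mathcal{A},1}$ and $A_{\mathcal{A},m}\hookrightarrow U_{\mathcal{A},m}$. The key geometric observation is that because $\mathcal{A}$ is central, each $\alpha_H\in V^{*}$ is linear, so $\alpha_H(\lambda\vec{x_i})=\lambda\alpha_H(\vec{x_i})$, and thus multiplying a point by a scalar $\ge 1$ preserves the sign of every $\alpha_H(\vec{x_i})$ (hence preserves membership in $U_{\mathcal{A},m}$) while amplifying $|\alpha_H(\vec{x_i})|$ enough to land in $A_{\mathcal{A},m}$.

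First, define a continuous function $\mu\colon U_{\mathcal{A},m}\to\mathbb{R}_{>0}$ by
\[
\mu(\vec{x})\ =\ \max_{H\in\mathcal{A}}\ \min_{1\le i\le m}|\alpha_H(\vec{x_i})|.
\]
By the description of $U_{\mathcal{A},m}$ in Proposition~\ref{str}(1), any $\vec{x}\in U_{\mathcal{A},m}$ admits some $H\in\mathcal{A}$ with $\alpha_H(\vec{x_i})\ne 0$ for every $i$, which forces $\mu(\vec{x})>0$. The function $\mu$ is continuous, being a finite max of finite mins of continuous functions, and by definition of $A_{\mathcal{A},m}$ (Definition~\ref{A}) one has $\vec{x}\in A_{\mathcal{A},m}$ if and only if $\mu(\vec{x})\ge 1$. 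Next set $\lambda(\vec{x})=\max(1,\,1/\mu(\vec{x}))$, which is continuous and satisfies $\lambda(\vec{x})\ge 1$, and define
\[
F(\vec{x},t)\ =\ \bigl(1+t(\lambda(\vec{x})-1)\bigr)\,\vec{x}.
\]

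Then $F(\vec{x},0)=\vec{x}$; for $\vec{x}\in A_{\mathcal{A},m}$ one has $\lambda(\vec{x})=1$, hence $F(\vec{x},t)=\vec{x}$ for all $t$; and at $t=1$, choosing $H^{*}$ attaining the maximum in $\mu(\vec{x})$, we get $|\alpha_{H^{*}}(\lambda(\vec{x})\vec{x_i})|=\lambda(\vec{x})|\alpha_{H^{*}}(\vec{x_i})|\ge\lambda(\vec{x})\mu(\vec{x})\ge 1$ for each $i$, so $F(\vec{x},1)\in A_{\mathcal{A},m}$. Finally, for any $t\in[0,1]$ the scalar $1+t(\lambda(\vec{x})-1)$ is positive, so the nonvanishing of $\alpha_{H^{*}}(\vec{x_i})$ is preserved and $F(\vec{x},t)\in U_{\mathcal{A},m}$. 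This exhibits $A_{\mathcal{A},m}$ as a strong deformation retract of $U_{\mathcal{A},m}$, proving the inclusion is a homotopy equivalence; the same argument applied with $m=1$ handles the first inclusion. I do not anticipate any serious obstacle here: the only points requiring verification are the continuity of $\mu$ and the sign-preservation under scaling, both of which are immediate from the centrality of $\mathcal{A}$ and the finiteness of $\mathcal{A}$.
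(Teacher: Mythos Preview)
Your proof is correct and essentially identical to the paper's own argument in the Appendix (Proposition~9.1): both construct a strong deformation retraction by radially scaling $\vec{x}$ by the factor $\max(1,1/\mu(\vec{x}))$ along a straight-line homotopy. The paper encodes the capping at $1$ via an auxiliary piecewise function $\rho(x)=\min(1,|x|)$ and writes the scaling factor as $1/\max_{H}\min_{i}\rho(\alpha_{H}(\vec{x_i}))$, which unwinds to exactly your $\lambda(\vec{x})$; your presentation is slightly cleaner and spells out the verifications (positivity of $\mu$, stationarity on $A_{\mathcal{A},m}$, image in $A_{\mathcal{A},m}$ at $t=1$) that the paper leaves implicit.
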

\begin{proof}
see section \ref{hosoku1}.
\end{proof}

\begin{lemm}[two inclusion maps ($\heartsuit$) induce  isomorphisms on the homology groups of degree $r(\mathcal{A})-1$ or lower.]
\end{lemm}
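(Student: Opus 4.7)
The plan is to identify the set removed by each inclusion using Proposition \ref{str}, bound its codimension, and then invoke the standard fact that removing a closed subset of codimension at least $c$ from a smooth manifold induces isomorphisms on $H_k$ for $k\le c-2$. The $m=1$ case is immediate because $U_{\mathcal{A},1}=\bigcup_{H\in\mathcal{A}}(V\setminus H)=V\setminus\bigcap_{H\in\mathcal{A}}H=\mathbb{R}^l-\bigcap_{i=1}^{n}H_{i}$, so the inclusion is the identity map. The substantive work is for general $m$.

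By Proposition \ref{str}(1), $V^m\setminus U_{\mathcal{A},m}$ equals the union of the affine subspaces $S_s=\bigcap_{j=1}^m \pi_j^{-1}(\bigcap_{H\in s^{-1}(j)}H)$ indexed by maps $s\colon\mathcal{A}\to\{1,\ldots,m\}$. The constant function $s\equiv j$ contributes precisely $\pi_j^{-1}(\bigcap_i H_i)$, and the union of these over $j$ is exactly $V^m\setminus(\mathbb{R}^l-\bigcap_i H_i)^m$. Hence the set removed by the inclusion $U_{\mathcal{A},m}\hookrightarrow(\mathbb{R}^l-\bigcap_i H_i)^m$ is
\[
R=\bigcup_{s \text{ non-constant}} S_s\cap(\mathbb{R}^l-\bigcap_i H_i)^m,
\]
a finite union of pieces of proper affine subspaces of $V^m$.

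Under the assumption that $\mathcal{A}$ is indecomposable (so that $\Gamma(\mathcal{A})$ is connected), Proposition \ref{str}(3) shows that every non-constant $s$ strictly fails the equality condition, and hence the corresponding $S_s$ has codimension at least $r(\mathcal{A})+1$ in $V^m$. Therefore $R$ is a closed subset of the manifold $M:=(\mathbb{R}^l-\bigcap_i H_i)^m$, stratified by smooth affine submanifolds each of codimension at least $c:=r(\mathcal{A})+1$. Applying the standard result yields the desired isomorphism $H_k(U_{\mathcal{A},m})\to H_k(M)$ for all $k\leq c-2=r(\mathcal{A})-1$.

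The main obstacle is justifying the codimension-based preservation of homology itself, since $R$ is only a finite union of affine subspaces rather than a single submanifold. This can be handled either by transversality applied stratum-by-stratum (pushing $k$-cycles off each stratum when $k<c$ and null-homotopies when $k+1<c$), or via the long exact sequence of the pair $(M,M\setminus R)$ combined with Lefschetz duality $H_k(M,M\setminus R)\cong \check H_c^{\dim M-k}(R)$, which vanishes for $k<c$ because $\dim R\leq \dim M-c$. Either way the conclusion follows routinely from the codimension bound.
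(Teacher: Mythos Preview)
Your proof is correct and takes essentially the same approach as the paper, which records only the single line ``This follows from Proposition \ref{str}.'' You have filled in exactly the details that sentence suppresses: using Proposition \ref{str}(1) to identify the complement, using (2)--(3) together with indecomposability to push the codimension of the non-constant strata to at least $r(\mathcal{A})+1$, and then applying the standard codimension argument (via transversality or Lefschetz duality) to conclude the isomorphism in degrees $\le r(\mathcal{A})-1$.
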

\begin{proof}
This follows from Proposition \ref{str}.
\end{proof}

This follows purely from algebraic topology.
\begin{fact}[a proposition  concerning  sphere]\label{sphere}
Let $n, m$ be natural numbers such that $n, m \geq 2$.
Let $i$ be a natural number such that $1 \leq i \leq m$
Let $\vec{x} = (\vec{x_1},\vec{x_2},\ldots,\vec{x_{m-1}})$ be arbitrary points of $(S^n)^{m-1}$.
We define $\rho_{i,\vec{x}} : S^n \to (S^n)^m$ as follows.
\[
    \begin{array}{r@{\,\,}c@{\,\,}c@{\,\,}c}
       \rho_{i, \vec{x}}\colon&S^n&\xrightarrow{\qquad\qquad\qquad}&(S^n)^m\\
       &\rotatebox{90}{$\in$}&&\rotatebox{90}{$\in$}\\
       &p&\xmapsto{\qquad\qquad\qquad}&((\vec{x_i})_1,(\vec{x_i})_2,\ldots,(\vec{x_i})_{i-1},p,(\vec{x_i})_{i+1},\ldots,(\vec{x_i})_{m-1})
    \end{array}
\]
Let $\Delta \colon S^n \to (S^n)^m$ be a diagonal map.
In this case, we have the following relationship between maps induced on $n$ dimensional homology groups.
\[
\sum_{i=1}^{m}(\rho_{i,\vec{x}})_{\ast} = \Delta_{\ast}\].
\end{fact}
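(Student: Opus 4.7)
The plan is to reduce the statement to a direct computation on $H_n((S^n)^m)$ by using the Künneth formula to identify this group with $\mathbb{Z}^m$, and then inspecting how $\rho_{i,\vec{x}}$ and $\Delta$ interact with each coordinate projection.

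First, I would recall that since $H_*(S^n)$ is free and concentrated in degrees $0$ and $n$, the Künneth theorem gives
\[
H_n((S^n)^m) \;\cong\; \bigoplus_{j=1}^{m} H_n(S^n) \;\cong\; \mathbb{Z}^m,
\]
and moreover the homomorphism
\[
\bigl((\pi_1)_*,\ldots,(\pi_m)_*\bigr)\colon H_n((S^n)^m)\longrightarrow H_n(S^n)^m
\]
induced by the coordinate projections $\pi_j\colon (S^n)^m\to S^n$ is an isomorphism. Hence to prove the identity $\sum_{i=1}^m (\rho_{i,\vec{x}})_* = \Delta_*$ it suffices to verify it after post-composing with $(\pi_j)_*$ for every $j\in\{1,\ldots,m\}$.

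Next I would compute each composition at the level of maps. The map $\pi_j\circ\rho_{i,\vec{x}}\colon S^n\to S^n$ is the identity when $j=i$ and is the constant map at the appropriate coordinate of $\vec{x}$ when $j\neq i$. On $H_n$, the identity acts as the identity and a constant map induces zero (since $n\geq 1$). Therefore $(\pi_j)_*(\rho_{i,\vec{x}})_* = \delta_{ij}\cdot\mathrm{id}_{H_n(S^n)}$. On the other hand $\pi_j\circ\Delta = \mathrm{id}_{S^n}$ for every $j$, so $(\pi_j)_*\Delta_* = \mathrm{id}_{H_n(S^n)}$.

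Combining these observations yields, for each fixed $j$,
\[
(\pi_j)_*\Bigl(\sum_{i=1}^m (\rho_{i,\vec{x}})_*\Bigr) \;=\; \sum_{i=1}^m \delta_{ij}\cdot\mathrm{id} \;=\; \mathrm{id} \;=\; (\pi_j)_*\Delta_*,
\]
and because $((\pi_j)_*)_{j=1}^m$ is an isomorphism we conclude $\sum_{i=1}^m (\rho_{i,\vec{x}})_* = \Delta_*$. There is no serious obstacle here: the only point to keep in mind is to invoke Künneth to assemble the projections into an isomorphism, after which everything is a one-line verification. The result is also independent of $\vec{x}$, as it must be since $(S^n)^{m-1}$ is path-connected and all $\rho_{i,\vec{x}}$ with varying $\vec{x}$ are homotopic.
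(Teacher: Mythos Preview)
Your argument is correct: the K\"unneth decomposition of $H_n((S^n)^m)$ as $\mathbb{Z}^m$ via the coordinate projections reduces the identity to the elementary check $(\pi_j\circ\rho_{i,\vec{x}})_* = \delta_{ij}\cdot\mathrm{id}$ and $(\pi_j\circ\Delta)_* = \mathrm{id}$, which you carry out cleanly. The paper itself does not give a proof of this statement at all; it is recorded as a \emph{Fact} with the one-line remark ``This follows purely from algebraic topology,'' so there is no proof in the paper to compare against---you have supplied exactly the standard justification the author presumably had in mind.
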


From now on, we fix an element of \textbf{Ch}$(\mathcal{A})$, $c_0$.
We set $\vec{c_0} = (c_0, c_0,\ldots,c_0)$.\\

From the Fact  \ref{sphere} and the Figure \ref{kakanzusiki_1} and \ref{kakanzusiki_2}, we have the following.


\begin{prop}\label{1}
Let $\mathcal{A} = \{H_1,H_2,\ldots,H_n\}$ be a real central indecomposable hyperplane arrangement in $V$ such that $n =|\mathcal{A}|\ge3$.
Let $m$ be a natural number.
Let $\Phi \colon \textbf{Ch}(\mathcal{A})^m \to \textbf{Ch}(\mathcal{A})$ be an admissible map.
In this case, we have 
\[
\sum_{i=1}^{m}\bigg(
\soejitilde{-0.2}{0.5}{-0.5}{1.3}{{\Phi} \circ \rho_{i, \vec{c_0}}}{M}
\bigg)_{\ast}
= \mathrm{id}\].
\end{prop}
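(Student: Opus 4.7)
The plan is to transport the sphere-level identity of Fact \ref{sphere} across the two commutative diagrams of Figures \ref{kakanzusiki_1} and \ref{kakanzusiki_2}, and then absorb the diagonal using PAR.

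First, I read off the diagrams. By Proposition \ref{nerve} every $\Gamma$ arrow is a homotopy equivalence; the two leftmost inclusions in the $(\clubsuit)$ strip are homotopy equivalences by the first lemma of this subsection; the inclusion marked $(\heartsuit)$ induces isomorphisms on homology in degrees $\le r(\mathcal{A})-1$ by the second lemma, which rests on Proposition \ref{str}; and $r\circ\pi$ is a deformation retraction onto $S^{r(\mathcal{A})-1}$. Composing across the top and bottom rows of Figure \ref{kakanzusiki_1} produces isomorphisms
\[
H_{r(\mathcal{A})-1}(M_1(\mathcal{A})) \xrightarrow{\sim} H_{r(\mathcal{A})-1}(S^{r(\mathcal{A})-1}), \quad H_{r(\mathcal{A})-1}(M_m(\mathcal{A})) \xrightarrow{\sim} H_{r(\mathcal{A})-1}((S^{r(\mathcal{A})-1})^m)
\]
that intertwine $(\widetilde{\rho_{i,\vec{c_0}}}^M)_{\ast}$ with $(\rho_{i,\,r\circ\pi(\vec{b})})_{\ast}$ on the sphere. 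Chasing Figure \ref{kakanzusiki_2} in the same way identifies $(\widetilde{\Delta}^M)_{\ast}$ with the sphere diagonal $\Delta_{\ast}$.

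Next, I apply Fact \ref{sphere} with $n=r(\mathcal{A})-1$ and $\vec{x}=r\circ\pi(\vec{b})$. Since $|\mathcal{A}|\ge 3$ and $\mathcal{A}$ is indecomposable we have $r(\mathcal{A})\ge 2$, so $n\ge 1$, and the case $m=1$ is trivial, so I may assume $m\ge 2$. The fact yields
\[
\sum_{i=1}^{m} (\rho_{i,\,r\circ\pi(\vec{b})})_{\ast} = \Delta_{\ast}
\]
on $H_{r(\mathcal{A})-1}((S^{r(\mathcal{A})-1})^m)$, and pulling back along the horizontal isomorphisms obtained above gives
\[
\sum_{i=1}^{m} (\widetilde{\rho_{i,\vec{c_0}}}^M)_{\ast} = (\widetilde{\Delta}^M)_{\ast} : H_{r(\mathcal{A})-1}(M_1(\mathcal{A})) \to H_{r(\mathcal{A})-1}(M_m(\mathcal{A})).
\]
Finally, I post-compose with $(\widetilde{\Phi}^M)_{\ast}$. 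Because simplicial maps compose, $\widetilde{\Phi}^M \circ \widetilde{\rho_{i,\vec{c_0}}}^M = \widetilde{\Phi\circ\rho_{i,\vec{c_0}}}^M$, while PAR says $\Phi\circ\Delta=\mathrm{id}$ on $\textbf{Ch}(\mathcal{A})$, whence $\widetilde{\Phi}^M \circ \widetilde{\Delta}^M = \mathrm{id}_{M_1(\mathcal{A})}$. Combining these yields
\[
\sum_{i=1}^{m} \bigl(\widetilde{\Phi\circ\rho_{i,\vec{c_0}}}^M\bigr)_{\ast} = \mathrm{id}
\]
on $H_{r(\mathcal{A})-1}(M_1(\mathcal{A}))$, which is the claim.

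The main obstacle is the bookkeeping required to verify that the two diagrams actually commute with compatible basepoints, so that the sphere identity transports correctly. The $\Gamma$ arrows in the $(\spadesuit)$ part depend on chosen basepoints in $\mathfrak{A}_{\mathcal{A},\bullet}$, and Subsection \ref{2dannme} arranges the basepoint systems $B_c$ and $B_{c_1}\times\cdots\times B_{c_{m-1}}$ precisely so that $(\rho_{i,\vec{b}},\rho_{i,\vec{c}}^{\prime\prime})$ and $(\Delta,\Delta^{\prime\prime})$ become morphisms in $\textbf{ClConv}_\textbf{*}$. The remark that $\Gamma$ is only determined up to homotopy under changes of basepoint is exactly what lets Fact \ref{sphere} apply uniformly across all $m$ summands, independently of the particular $\vec{b}$ fixed in the construction.
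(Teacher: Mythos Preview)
Your argument is correct and is essentially the paper's own proof, spelled out in more detail: both transport Fact~\ref{sphere} across Figures~\ref{kakanzusiki_1} and~\ref{kakanzusiki_2} to obtain $\sum_i(\rho_{i,\vec{c_0}})_* = \Delta_*$ on $H_{r(\mathcal{A})-1}$ of the $M$-complexes, then post-compose with $\Phi_*$ and invoke PAR to collapse $\Phi\circ\Delta$ to the identity. Your explicit discussion of the diagram chase and the basepoint bookkeeping in the $(\spadesuit)$ block simply makes visible what the paper's terse chain of equalities leaves to the surrounding figures.
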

\begin{proof}

\begin{align*}
\sum_{i=1}^{m}\bigg(
\Soejitilde{-0.2}{0.5}{-0.7}{1.3}{{\Phi} \circ \rho_{i, \vec{c_0}}}{M}
\bigg)_{\ast}
&=
\sum_{i=1}^{m}\bigg(
\Soejitilde{0}{0}{-0.2}{1.2}{\Phi}{M}
\hspace{0.6em}\circ
\Soejitilde{0}{0.5}{-0.2}{0.8}{\rho_{i, \vec{c_0}}}{M}
\hspace{0.6em}\bigg)_{\ast}\\
&=
\bigg(
\Soejitilde{0}{0}{-0.2}{1.2}{\Phi}{M}
\hspace{0.6em}\bigg)_{\ast}
\circ
\sum_{i=1}^{m}
\bigg(
\Soejitilde{0}{0.5}{-0.2}{0.8}{\rho_{i, \vec{c_0}}}{M}
\hspace{0.6em}\bigg)_{\ast}\\
&=
\bigg(
\Soejitilde{0}{0}{-0.2}{1.2}{\Phi}{M}
\hspace{0.6em}\bigg)_{\ast}
\circ
\bigg(
\Soejitilde{0}{0}{-0.2}{1.2}{\Delta}{M}
\hspace{0.6em}\bigg)_{\ast}\\
&=
\bigg(
\Soejitilde{0}{0}{-0.2}{1.2}{\Phi}{M}
\hspace{0.6em}\circ
\Soejitilde{0}{0}{-0.2}{1.2}{\Delta}{M}
\hspace{0.6em}\bigg)_{\ast}\\
&=
\bigg(
\Soejitilde{0}{0.5}{-0.2}{1.3}{\Phi\circ\Delta}{M}
\hspace{0.6em}\bigg)_{\ast}\\
&=
\bigg(
\Soejitilde{0}{0.5}{-0.2}{1.3}{\mathrm{id}}{M}
\hspace{0.6em}\bigg)_{\ast}\\
&=\mathrm{id}
\end{align*}

\end{proof}

In the following discussion, we show that $i_0$ of Proposition \ref{1} is actually a dictator.
This discussion is greatly inspired by Manabe~\cite{Mana}.


The following lemma is an important property of IIA.

\begin{lemm}[an important property of IIA]\label{2}
Let $\mathcal{A}$ be a real central hyperplane arrangement in $V$.
When a map $f\colon\textbf{Ch}(\mathcal{A}) \to \textbf{Ch}(\mathcal{A})$ satisfies IIA, the following five properties are equivalent.\\

\noindent
$(\mathrm{i})$ $f$ is one-to-one.\\
$(\mathrm{ii})$ For each element $H$ of $\mathcal{A}$, $\phi_{H}$ is $\mathrm{id}$ or $\bar{\mathrm{id}}$. \\
     $($Here, $\phi_{H}$ is defined in definition $\ref{IIA2}$, and $\bar{\mathrm{id}}$ is the map which exchanges the two elements of $\textbf{Ch}(\{H\})$.$)$\\
$(\mathrm{iii})$ $\soejitilde{0}{0.5}{-0.1}{1.2}{f}{M}$ is not null-homotopic.\\
$(\mathrm{iv})$  the degree of $\soejitilde{0}{0.5}{-0.1}{1.2}{f}{M}$ is $1$ or $-1$.\\
$(\mathrm{v})$  $\soejitilde{0}{0.5}{-0.1}{1.2}{f}{M}$ is a homeomorphism.\\
\end{lemm}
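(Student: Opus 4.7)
The plan is to establish the cycle of implications $(\mathrm{i}) \Leftrightarrow (\mathrm{ii})$, $(\mathrm{ii}) \Rightarrow (\mathrm{v}) \Rightarrow (\mathrm{iv}) \Rightarrow (\mathrm{iii}) \Rightarrow (\mathrm{i})$, combining a combinatorial analysis of IIA with the topological identification of $|M_1(\mathcal{A})|$ as a sphere that was set up in the previous sections.

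For $(\mathrm{i}) \Leftrightarrow (\mathrm{ii})$ I would use that any chamber $c \in \textbf{Ch}(\mathcal{A})$ is uniquely determined by its sign vector $(\epsilon_H(c))_{H \in \mathcal{A}}$, and that IIA forces the sign vector of $f(c)$ to equal $(\phi_H(\epsilon_H(c)))_H$. If each $\phi_H$ is a bijection, distinct $c$ give distinct sign vectors of $f(c)$, so $f$ is injective. Conversely, since $\textbf{Ch}(\{H\})$ has only two elements, any non-bijective $\phi_{H_0}$ must be constant; then every value of $f$ lies on a single side of $H_0$, and since chambers of $\mathcal{A}$ occur on both sides of $H_0$, the image of $f$ is a proper subset of $\textbf{Ch}(\mathcal{A})$, contradicting injectivity.

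For $(\mathrm{ii}) \Rightarrow (\mathrm{v})$, once each $\phi_H$ is $\mathrm{id}$ or $\bar{\mathrm{id}}$ the map $f$ is a permutation of $\textbf{Ch}(\mathcal{A})$, and a short diagram chase shows that $f^{-1}$ also satisfies IIA with associated maps $\phi_H^{-1}$. The simplicial map induced by $f^{-1}$ then inverts $\soejitilde{0}{0.5}{-0.1}{1.2}{f}{M}$, so the latter is a simplicial isomorphism and hence a homeomorphism of $|M_1(\mathcal{A})|$. For $(\mathrm{v}) \Rightarrow (\mathrm{iv})$ I would invoke the homotopy equivalence $|M_1(\mathcal{A})| \simeq S^{r(\mathcal{A})-1}$ supplied by the commutative diagrams of Figures~\ref{kakanzusiki_1} and~\ref{kakanzusiki_2}, after which degree is well-defined and any self-homeomorphism acts by $\pm 1$ on top homology. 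The step $(\mathrm{iv}) \Rightarrow (\mathrm{iii})$ is immediate since a null-homotopic map has degree zero.

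Finally, for $(\mathrm{iii}) \Rightarrow (\mathrm{i})$ I would argue by contrapositive: if $f$ is not injective, then by the reasoning of $(\mathrm{i}) \Rightarrow (\mathrm{ii})$ there is an $H_0$ for which $\phi_{H_0}$ is constant, so the image $f(\textbf{Ch}(\mathcal{A}))$ is contained in one side of $H_0$; by the definition of $M_1(\mathcal{A})$ such a collection of chambers itself spans a simplex, and so the image of $\soejitilde{0}{0.5}{-0.1}{1.2}{f}{M}$ lies inside a single closed simplex of $|M_1(\mathcal{A})|$, a contractible subspace, which forces $\soejitilde{0}{0.5}{-0.1}{1.2}{f}{M}$ to be null-homotopic. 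The main obstacle is the step $(\mathrm{v}) \Rightarrow (\mathrm{iv})$: it leans essentially on the sphere identification of $|M_1(\mathcal{A})|$ developed earlier (and hence implicitly on the indecomposability hypothesis in force in the surrounding section), whereas every other implication is a formal consequence of the definitions of IIA and of $M_1(\mathcal{A})$.
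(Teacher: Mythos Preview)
Your argument is correct and matches the paper's own proof in spirit: the paper gives the terse chain $(\mathrm{v})\Rightarrow(\mathrm{iv})\Rightarrow(\mathrm{iii})\Rightarrow(\mathrm{ii})$ as ``clear'', $(\mathrm{i})\Rightarrow(\mathrm{v})$ as ``obvious'', and $(\mathrm{ii})\Rightarrow(\mathrm{i})$ as ``easy'', while you supply the details and reorder slightly (using $(\mathrm{ii})\Rightarrow(\mathrm{v})$ and $(\mathrm{iii})\Rightarrow(\mathrm{i})$ instead, which is equivalent once $(\mathrm{i})\Leftrightarrow(\mathrm{ii})$ is in hand). Your contrapositive argument for $(\mathrm{iii})\Rightarrow(\mathrm{i})$---that a constant $\phi_{H_0}$ forces the image of $\soejitilde{0}{0.5}{-0.1}{1.2}{f}{M}$ into a single simplex---is exactly the content the paper suppresses under ``clear''.

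One small correction to your closing remark: the identification $|M_1(\mathcal{A})|\simeq S^{r(\mathcal{A})-1}$ does \emph{not} depend on indecomposability. For $m=1$ Proposition~\ref{str}(1) gives $U_{\mathcal{A},1}=V\setminus\bigcap_{H\in\mathcal{A}}H$ unconditionally, and the chain of homotopy equivalences in the rows of Figures~\ref{kakanzusiki_1} and~\ref{kakanzusiki_2} (the two nerve theorems plus the retraction onto $S^{r(\mathcal{A})-1}$) uses no hypothesis on $\mathcal{A}$ beyond centrality. Indecomposability enters only when analysing $U_{\mathcal{A},m}$ for $m\ge 2$ via parts (2)--(3) of Proposition~\ref{str}. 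So the step $(\mathrm{v})\Rightarrow(\mathrm{iv})$ is not the obstacle you feared.
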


\begin{proof}
(v) $\Rightarrow$ (iv) $\Rightarrow$ (iii) $\Rightarrow$ (ii) is clear. 
(i) $\Rightarrow$ (v) is also obvious. 
To show (ii) $\Rightarrow$ (i), it is enough to derive the injectivity of $f$ from (ii), and this is easy.
\end{proof}

\begin{cor}\label{3}
Let us assume $g \colon \textbf{Ch}(\mathcal{A}) \to \textbf{Ch}(\mathcal{A})$ is one-to-one, satisfies IIA, and has a fixed point.
In this case, $g$ is $\mathrm{id}$.
\end{cor}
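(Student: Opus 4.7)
The plan is to chain together the structural information provided by Lemma \ref{2} with the fixed point hypothesis, and then exploit the fact that a chamber of $\mathcal{A}$ is determined by on which side of each hyperplane it lies.

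First, since $g$ is one-to-one and satisfies IIA, I invoke the equivalence (i) $\Leftrightarrow$ (ii) of Lemma \ref{2} to conclude that for every $H \in \mathcal{A}$, the induced map $\phi_H \colon \textbf{Ch}(\{H\}) \to \textbf{Ch}(\{H\})$ is either $\mathrm{id}$ or $\bar{\mathrm{id}}$. Next, I pick a fixed point $c_0$ of $g$, i.e. $g(c_0) = c_0$. Applying $\epsilon_H$ to both sides and using the commutative diagram that defines IIA (Definition \ref{IIA2}), I get
\[
\phi_H(\epsilon_H(c_0)) \;=\; \epsilon_H(g(c_0)) \;=\; \epsilon_H(c_0)
\]
for every $H \in \mathcal{A}$. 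Thus $\phi_H$ has a fixed point in $\textbf{Ch}(\{H\})$. Since $\textbf{Ch}(\{H\})$ consists of exactly two elements and $\bar{\mathrm{id}}$ swaps them (and therefore fixes neither), the only remaining possibility is $\phi_H = \mathrm{id}$ for every $H \in \mathcal{A}$.

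Finally, for an arbitrary chamber $c \in \textbf{Ch}(\mathcal{A})$, the IIA diagram gives $\epsilon_H(g(c)) = \phi_H(\epsilon_H(c)) = \epsilon_H(c)$ for every $H \in \mathcal{A}$. A chamber of $\mathcal{A}$ is by definition a connected component of $V - \bigcup_{H \in \mathcal{A}} H$, and is therefore completely determined by the tuple $(\epsilon_H(c))_{H \in \mathcal{A}}$ recording on which side of each hyperplane it lies; so $g(c) = c$. Since $c$ was arbitrary, $g = \mathrm{id}$.

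There is no real obstacle: the whole statement is a short consequence of Lemma \ref{2} together with the trivial observation that the nontrivial involution on a two-element set has no fixed points. The only conceptual point worth flagging is the very last step, where I use that distinct chambers of $\mathcal{A}$ have distinct sign patterns with respect to the hyperplanes of $\mathcal{A}$, which is immediate from the definition of a chamber.
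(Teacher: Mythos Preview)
Your proof is correct and follows exactly the same approach as the paper's: use Lemma~\ref{2} to conclude each $\phi_H$ is $\mathrm{id}$ or $\bar{\mathrm{id}}$, then use the fixed point to rule out $\bar{\mathrm{id}}$. The paper's version is simply a two-sentence sketch of what you have spelled out in detail.
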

\begin{proof}
From the above proposition, for each $H$, $\phi_{H}$ must be either id or $\bar{\mathrm{id}}$. However, the condition that it has a fixed point implies that they must all be identity.
\end{proof}

From Proposition \ref{1}, Lemma \ref{2}, and Corollary \ref{3}, the following hold.
\begin{prop}
Let $\mathcal{A} = \{H_1, H_2, \ldots, H_n\}$ be a real central hyperplane arrangement in $V$ such that $n =|\mathcal{A}|\ge3$.
Let $m$ be a natural number.
Let $\Phi \colon \textbf{Ch}(\mathcal{A})^m \to \textbf{Ch}(\mathcal{A})$ be an admissible map.
In this case, there exists some $i_0 (1 \leq i_0 \leq m)$, such that $\Phi \circ \rho_{i_0,\vec{c_0}}$ is $\mathrm{id}$.
\end{prop}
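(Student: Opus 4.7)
The plan is to combine Proposition~\ref{1} with the characterization of IIA self-maps of $\mathbf{Ch}(\mathcal{A})$ provided by Lemma~\ref{2} and Corollary~\ref{3}. First I would observe that each composite $\Phi \circ \rho_{i,\vec{c_0}} \colon \mathbf{Ch}(\mathcal{A}) \to \mathbf{Ch}(\mathcal{A})$ satisfies IIA (since IIA is clearly preserved under composition, and $\rho_{i,\vec{c_0}}$ itself satisfies IIA as recalled in the labeling of diagrams~\ref{kakanzusiki_1},~\ref{kakanzusiki_2}) and fixes $c_0$: indeed, PAR gives $\Phi(c_0,\ldots,c_0) = c_0$, so $(\Phi \circ \rho_{i,\vec{c_0}})(c_0) = \Phi(c_0, \ldots, c_0) = c_0$ for every $i$.

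Next, Lemma~\ref{2} tells us that the induced simplicial self-map of $M_1(\mathcal{A})$ coming from $\Phi \circ \rho_{i,\vec{c_0}}$ has degree in $\{0, +1, -1\}$ on the top-dimensional homology group, which is isomorphic to $\mathbb{Z}$ via the topological identifications of the previous subsections. Moreover, if this degree is $\pm 1$, then by the equivalence (iv)$\Leftrightarrow$(i) in Lemma~\ref{2} the map is a bijection; combined with IIA and the fixed point $c_0$, Corollary~\ref{3} then forces it to equal $\mathrm{id}$. Since the identity has degree $+1$, this rules out degree $-1$: each of the $m$ degrees is thus either $0$ or $+1$.

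Finally, Proposition~\ref{1} asserts that the sum of the induced homomorphisms on $H_{r(\mathcal{A})-1}(M_1(\mathcal{A})) \cong \mathbb{Z}$ equals the identity, i.e.\ the sum of these $m$ integers equals $1$. Since each summand lies in $\{0,+1\}$, there must exist at least one (in fact exactly one) index $i_0$ with degree $+1$. For this $i_0$ the map $\Phi \circ \rho_{i_0, \vec{c_0}}$ is one-to-one, satisfies IIA, and fixes $c_0$, so Corollary~\ref{3} yields $\Phi \circ \rho_{i_0, \vec{c_0}} = \mathrm{id}$, as required.

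The hard part is not in this final assembly but in Proposition~\ref{1}, whose Nerve-theoretic scaffolding has already been set up. Once Proposition~\ref{1} and the equivalences of Lemma~\ref{2} are in hand, the argument reduces to an elementary observation: $m$ integers in $\{0,\pm 1\}$ summing to $1$ must include a $+1$, and the sign constraint excluding $-1$ comes directly from the fixed-point condition furnished by PAR through Corollary~\ref{3}.
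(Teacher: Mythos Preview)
Your proof is correct and follows essentially the same route as the paper: invoke Proposition~\ref{1}, then apply Lemma~\ref{2} and Corollary~\ref{3} together with the fixed point $c_0$ furnished by PAR. The only difference is organizational: the paper simply notes that since the sum in Proposition~\ref{1} is nonzero some $\Phi\circ\rho_{i_0,\vec{c_0}}$ must be non--null-homotopic, and then runs Lemma~\ref{2} and Corollary~\ref{3} on that single index; you instead first pin down every degree to $\{0,+1\}$ (using Corollary~\ref{3} to exclude $-1$) and then read off the unique $i_0$ from the sum. Your version yields the slightly sharper conclusion that $i_0$ is unique, at the cost of one extra sentence.
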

\begin{proof}
From Proposition \ref{1}, there exists some $i_0$ $(1 \leq i_0 \leq m)$ such that $\Phi \circ \rho_{i_0,\vec{c_0}}$ is not null-homotopic.
In this case, from Proposition \ref{1}, $\Phi \circ \rho_{i_0,\vec{c_0}}$ is one-to-one.
$\Phi$ and $\rho_{i_0,\vec{c_0}}$ satisfy IIA, therefore $\Phi \circ \rho_{i_0,\vec{c_0}}$ also satisfies IIA.
Furthermore, $\Phi$ satisfies PAR, and $\vec{c_0} = (c_0,c_0,\ldots,c_0)$, $\Phi \circ \rho_{i_0,\vec{c_0}}$ has $c_0$ as a fixed point.
Therefore, from Corollary \ref{3}, $\Phi \circ \rho_{i_0,\vec{c_0}}$ is $\mathrm{id}$.
\end{proof}

Up to this point, it has been shown that for $\vec{c} \in \textbf{Ch}(\mathcal{A})^m$ where all components except the $i_0$ component are the same, $\Phi$ coincides with the projection onto the $i_0$-th component.
If this is also true for all $\vec{c} = (c_1,c_2,\ldots,c_{m-1}) \in \textbf{Ch}(\mathcal{A})^{m-1}$, then the proof is complete.

Let us $\textrm{IIA}_{\textrm{bij}}(\mathcal{A})$ be the set of all bijections from $\textbf{Ch}(\mathcal{A})$ to $\textbf{Ch}(\mathcal{A})$ that satisfy IIA.
For $f$ in $\textrm{IIA}_{\textrm{bij}}(\mathcal{A})$, we can think of a sequence of maps $(\phi_{H_1}, \phi_{H_2}, \ldots, \phi_{H_n})$. (Here, $\mathcal{A} = \{H_1, H_2, \ldots, H_n\}$).
From Proposition \ref{1}, this sequence consists of id or $\bar{\mathrm{id}}$.
We can make $\textrm{IIA}_{\textrm{bij}}(\mathcal{A})$ into a metric space by introducing Hamming distance of the sequences above.
Furthermore, for any natural number $k$, we can define a metric can be defined for any two elements $\vec{c_1}, \vec{c_2}$ in $\textbf{Ch}(\mathcal{A})^{k}$ by setting the metric to be the number of $H \in \mathcal{A}$ that separates $\{\vec{c_1}, \vec{c_2}\}$.

We now define the following map.
 \[
    \begin{array}{r@{\,\,}c@{\,\,}c@{\,\,}c}
       \phi\colon&\textbf{Ch}(\mathcal{A})^{m-1}&\longrightarrow&\textrm{IIA}_{\textrm{surj}}(\mathcal{A})\\
       &\rotatebox{90}{$\in$}&&\rotatebox{90}{$\in$}\\
       &\vec{c}&\longmapsto&\Phi \circ \rho_{i_0,\vec{c}}
    \end{array}
\]

By the preceding discussion, the domain and codomain of this map are metric spaces, and it is easy to see that $\phi$ satisfies $d(\phi(\vec{c_1},\vec{c_2})) \leq d(\vec{c_1},\vec{c_2})$ 
(In other words, $\phi$ is a non-expanding map.)

By the basic discussion of hyperplane arrangements, the following holds.

\begin{fact}\label{last}
Let $\mathcal{A} = \{H_1,H_2,\ldots,H_n\}$ be a real central hyperplane arrangement in \textit{V} such that $n =|\mathcal{A}|\ge3$.
Suppose there is no $\{H\} \in \mathcal{A}$ such that $\mathcal{A} = (\mathcal{A}-\{H\})\uplus  \{H\}$ $($particularly in the case that $\mathcal{A}$ is indecomposable).
In this case, for any element in 
$\mathrm{IIA_{surj}}(\mathcal{A})$ except for $\mathrm{id}$, the distance from $\mathrm{id}$ is at least $2$.
\end{fact}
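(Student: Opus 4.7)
The plan is to argue the contrapositive: assuming there is $f \in \mathrm{IIA}_{\mathrm{surj}}(\mathcal{A})$ with $f \neq \mathrm{id}$ and $d(f,\mathrm{id}) = 1$, I will derive that some $H_0 \in \mathcal{A}$ satisfies $\mathcal{A} = (\mathcal{A} - \{H_0\}) \uplus \{H_0\}$, contradicting the hypothesis. By Lemma \ref{2}, for any bijective IIA-map each $\phi_H$ is either $\mathrm{id}$ or $\bar{\mathrm{id}}$, so $d(f,\mathrm{id})$ simply counts those $H \in \mathcal{A}$ with $\phi_H = \bar{\mathrm{id}}$. Distance one therefore pins down a unique $H_0 \in \mathcal{A}$ with $\phi_{H_0} = \bar{\mathrm{id}}$ and $\phi_{H'} = \mathrm{id}$ for all $H' \neq H_0$. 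Chasing the IIA commuting squares, $f(c)$ lies on the same side of every $H' \neq H_0$ as $c$ and on the opposite side of $H_0$; since $f$ is defined on every chamber, this ``$H_0$-flipped'' image chamber must exist for each $c$, which is the statement that $H_0$ bisects every chamber of $\mathcal{A} - \{H_0\}$.

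Second, I would convert this geometric bisection property into the algebraic decomposition $\mathcal{A} = (\mathcal{A} - \{H_0\}) \uplus \{H_0\}$. By the Terao fact in Section~3, this decomposition is equivalent to $H_0$ being an isolated vertex of $\Gamma(\mathcal{A})$, i.e., $H_0$ belonging to no circuit, equivalently $r(\mathcal{A}) = r(\mathcal{A} - \{H_0\}) + 1$, equivalently $\alpha_{H_0} \notin \mathrm{span}\{\alpha_{H'} : H' \neq H_0\}$. It therefore suffices to show the contrapositive: any circuit containing $H_0$ yields a chamber of $\mathcal{A} - \{H_0\}$ that $H_0$ fails to bisect. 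Given a circuit $\{H_0, H_{i_1}, \ldots, H_{i_k}\}$, write the linear dependence $\alpha_{H_0} = \sum_{j=1}^{k} c_j \alpha_{H_{i_j}}$ with every $c_j \neq 0$; by minimality of the circuit $\{\alpha_{H_{i_j}}\}_{j}$ is linearly independent, so the sign pattern $\mathrm{sgn}(\alpha_{H_{i_j}}) = \mathrm{sgn}(c_j)$ is realized on an open subset of $V$. Pick such a point $p$, generically off every hyperplane of $\mathcal{A}$, and let $c_1$ be the chamber of $\mathcal{A} - \{H_0\}$ containing $p$. The signs of the $\alpha_{H_{i_j}}$ are constant on $c_1$, so every term $c_j \alpha_{H_{i_j}}$ is strictly positive there, giving $\alpha_{H_0} > 0$ throughout $c_1$. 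Hence $H_0 \cap c_1 = \emptyset$, contradicting the first paragraph.

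The step I expect to be the main obstacle is precisely this middle one---bridging the geometric statement ``$H_0$ bisects every chamber of $\mathcal{A} - \{H_0\}$'' with the algebraic statement ``$\alpha_{H_0} \notin \mathrm{span}\{\alpha_{H'} : H' \neq H_0\}$.'' The first paragraph is essentially an unwinding of IIA and of Lemma \ref{2}, and once a circuit is in hand the sign-realization/perturbation argument in the third step is routine thanks to the linear independence that circuit minimality supplies; what requires real care is assembling these ingredients---circuit membership, isolation in $\Gamma(\mathcal{A})$, and rank additivity---all as recorded in the earlier sections.
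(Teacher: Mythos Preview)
Your argument is correct. Note, though, that the paper does not actually prove this statement: it is labeled a \emph{Fact}, and the only proof sketch present in the source sits inside a \verb|\begin{comment}| block and does not appear in the compiled paper.

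That commented sketch shares your first step verbatim (distance $1$ forces a unique $H_0$ with $\phi_{H_0}=\bar{\mathrm{id}}$, hence $H_0$ must bisect every chamber of $\mathcal{A}\setminus\{H_0\}$), but then diverges from your approach. Rather than invoking circuits, it observes that the failure of $\mathcal{A}=(\mathcal{A}\setminus\{H_0\})\uplus\{H_0\}$ is the inclusion $\bigcap_{H'\neq H_0}H'\subset H_0$, passes to the essential quotient, and reduces to the question ``can a single hyperplane meet every chamber of an essential arrangement?''; since an essential arrangement contains a Boolean subarrangement, this is further reduced to the Boolean case, where the answer is visibly no. Your route via a circuit through $H_0$ and the associated linear dependence $\alpha_{H_0}=\sum_j c_j\alpha_{H_{i_j}}$ is more direct: linear independence of $\{\alpha_{H_{i_j}}\}$ (from circuit minimality) guarantees that the sign pattern $\mathrm{sgn}(\alpha_{H_{i_j}})=\mathrm{sgn}(c_j)$ is realizable, and on any chamber of $\mathcal{A}\setminus\{H_0\}$ realizing it one has $\alpha_{H_0}>0$ throughout. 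This avoids both the quotient step and the reduction to the Boolean case, and it ties in cleanly with the graph $\Gamma(\mathcal{A})$ and the decomposition criterion already stated in Section~3. Either approach works; yours is shorter and stays closer to the combinatorics already set up in the paper.
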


By Proposition \ref{last}, for any $\vec{c} = (c_1,c_2,\ldots,c_{m-1}) \in \textbf{Ch}(\mathcal{A})^{m-1}$, it follows that $\Phi \circ \rho_{i_0,\vec{c}}$ is $\mathrm{id}$.
Therefore, the main theorem (Theorem \ref{mainthm}) is proven.

\section{Appendix}\label{hosoku1}
\begin{prop}
Let $k$ be a natural number.
For the inclusion map, $A_{\mathcal{A},k} \hookrightarrow U_{\mathcal{A},k}$ there is a deformation retract $r \colon U_{\mathcal{A},k} \rightarrow A_{\mathcal{A},k}$.
Especially, $A_{\mathcal{A},k} \hookrightarrow U_{\mathcal{A},k}$ is homotopy equivalence.
\end{prop}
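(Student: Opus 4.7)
The plan is to build an explicit strong deformation retraction $r \colon U_{\mathcal{A},k} \to A_{\mathcal{A},k}$ by radial rescaling applied coordinate-by-coordinate. Both $U_{\mathcal{A},k}$ and $A_{\mathcal{A},k}$ are invariant under coordinatewise scaling by positive reals at least $1$ (in the first case because the condition defining $U_{\mathcal{A},k}$ only constrains the signs of the $\alpha_H$; in the second because $|\alpha_H(\lambda \vec{x})| = \lambda |\alpha_H(\vec{x})|$ for $\lambda > 0$), so the natural idea is to push each $\vec{x} \in U_{\mathcal{A},k}$ away from the origin just enough that the stronger $A$-condition is met. The key auxiliary function is
\[
f(\vec{x_1}, \ldots, \vec{x_k}) \;=\; \max_{H \in \mathcal{A}}\; \min_{1 \leq i \leq k} |\alpha_H(\vec{x_i})|,
\]
which is continuous on $V^k$ as a finite max-min of continuous functions, and strictly positive on $U_{\mathcal{A},k}$ by the very definition of $U_{\mathcal{A},k}$: for any $\vec{x} \in U_{\mathcal{A},k}$ some $H \in \mathcal{A}$ witnesses $\alpha_H(\vec{x_i}) \neq 0$ for every $i$.

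Next I would set $s(\vec{x}) = \max\bigl(1,\, 1/f(\vec{x})\bigr)$, continuous on $U_{\mathcal{A},k}$, and define $r(\vec{x}) = s(\vec{x}) \cdot \vec{x}$. Three routine checks finish the construction. First, $r(\vec{x}) \in A_{\mathcal{A},k}$: if $H_0 \in \mathcal{A}$ attains the outer max in $f(\vec{x})$, then for every $i$ we have $|\alpha_{H_0}(s(\vec{x}) \vec{x_i})| = s(\vec{x})\, f(\vec{x}) \geq 1$, so $(s(\vec{x}) \vec{x_1}, \ldots, s(\vec{x}) \vec{x_k})$ lies in the element of $\mathfrak{A}_{\mathcal{A},k}$ associated with $H_0$ and the chambers of $\{H_0\}$ containing each $\vec{x_i}$. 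Second, if $\vec{x} \in A_{\mathcal{A},k}$ then $f(\vec{x}) \geq 1$, so $s(\vec{x}) = 1$ and $r$ is the identity on $A_{\mathcal{A},k}$. Third, the straight-line homotopy
\[
F_t(\vec{x}) \;=\; \bigl((1-t) + t\, s(\vec{x})\bigr)\, \vec{x}, \qquad t \in [0,1],
\]
scales by a positive factor at least $1$, so it preserves $U_{\mathcal{A},k}$; it interpolates between $\mathrm{id}$ and $r$; and it fixes $A_{\mathcal{A},k}$ pointwise because $s \equiv 1$ there. Hence $F_t$ is a strong deformation retraction, and in particular the inclusion is a homotopy equivalence.

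I do not anticipate a serious obstacle. The only points that require a second thought are the continuity of $s$, which reduces to the fact that $\mathcal{A}$ is finite and so both the inner $\min$ and the outer $\max$ are taken over finite index sets and therefore preserve continuity, and the possibility that the $H_0$ attaining the max in $f$ might vary with $\vec{x}$ — this is harmless, since $f$ is defined as a pointwise maximum and the argument never needs a globally consistent choice of $H_0$, only the existence of \emph{some} such $H_0$ at each point.
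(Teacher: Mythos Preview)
Your proof is correct and essentially identical to the paper's: both construct the deformation retraction by coordinatewise radial scaling, and your factor $s(\vec{x}) = \max(1, 1/f(\vec{x}))$ with $f(\vec{x}) = \max_H \min_i |\alpha_H(\vec{x_i})|$ is literally the same as the paper's $1/\max_H \min_i \rho(\alpha_H(\vec{x_i}))$ once one notes $\rho(x) = \min(|x|,1)$. One tiny slip: in your first check the equality $|\alpha_{H_0}(s(\vec{x})\vec{x_i})| = s(\vec{x})\, f(\vec{x})$ should be an inequality $\geq$, since $f(\vec{x})$ is the \emph{minimum} over $i$ of $|\alpha_{H_0}(\vec{x_i})|$, but the conclusion $\geq 1$ is unaffected.
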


\begin{proof}
We define a function $\rho \colon \mathbb{R} \rightarrow \mathbb{R}$ as follows.

\begin{equation}  \label{eq: cases f}
\rho(x)=
    \begin{cases}
        1   &   \text{$1<x$}  \\
        x        &   \text{$0<x\leq1$}  \\
        -x        &   \text{$-1\leq x<0$}  \\
        1   &   \text{$x<-1$}
    \end{cases}
\end{equation}

Using this, we define a map $H \colon U_{\mathcal{A},k}\times [0,1] \rightarrow U_{\mathcal{A},k}$ as follows.
\begin{equation}  \label{eq: cases f}
H(\vec{y}=(\vec{x_1},\vec{x_2},\dots,\vec{x_m}),t)=
    (1-t)\vec{y} + t\frac{1}{(\underset{H \in \mathcal{A}}{\textrm{max}}(\underset{1\leq i \leq m}{\textrm{min}}(\rho(\alpha_{H}(\vec{x_i})))))}\vec{y}
\end{equation}

This is the desired deformation retraction.

\end{proof}

\section*{Acknowledgement}
I would like to express my gratitude to Professor Mikio Furuta for his constant encouragement and advice.
I also express my gratitude to the senior students in the graduate office for their helpful discussions and encouragement.

\nocite{1194c2af-bbd9-34a4-ab4a-a33e7650b716}
\nocite{orlik1992arrangements}
\nocite{*}
\bibliography{ref.bib}
\bibliographystyle{plain}

\end{document}